\newtheorem{theorem}{Theorem}[]
\newtheorem{lemma}[theorem]{Lemma}
\newtheorem{proposition}[theorem]{Proposition}
\newtheorem{corollary}[theorem]{Corollary}
\newtheorem{definition}[theorem]{Definition}
\newtheorem{remark}[theorem]{Remark}
\newtheorem{claim}[theorem]{Claim}
\newcommand{\cout}[1]{}
\newcommand{\vi}{\textbf{\textit{v}}}
\newcommand{\D}{\mathrm{d}}
\newcommand{\Rb}{\mathbb{R}}
\newcommand{\Sb}{\mathbb{S}}
\renewcommand{\O}{\Omega}
\newcommand{\g}{\boldsymbol{\gamma}}
\newcommand{\Beq}{\begin{equation}}
\newcommand{\Eeq}{\end{equation}}
\newcommand{\beq}{\begin{equation*}}
\newcommand{\eeq}{\end{equation*}}
\newcommand{\bal}{\begin{align}}
\newcommand{\eal}{\end{align}}
\newtheorem{example}{Example}
\title{Full reconstruction of a vector field from restricted Doppler and first integral moment transforms in $\mathbb{R}^{n}$}
\author{Rohit Kumar Mishra\thanks{Department of Mathematics, University of California, Santa Cruz, CA 95064, USA. rokmishr@ucsc.edu}}
\begin{document}
\maketitle
\begin{abstract}
We show that a vector field in $\mathbb{R}^n$ can be reconstructed uniquely from the knowledge of restricted Doppler and first integral moment transforms. The line complex we consider consists of all lines passing through a fixed curve $\g \subset \mathbb{R}^n$. The question of reconstruction of a symmetric $m$-tensor field from the knowledge of the first $m+1$ integral moments was posed by Sharafutdinov in his book (see pp. 78),  ``Integral geometry of tensor fields," Inverse and Ill-posed problems series, De Grutyer. In this work, we provide an answer to Sharafutdinov's question for the case of vector fields from restricted data comprising of the first $2$ integral moment transforms.
\end{abstract}

\section{Introduction}\label{sec:introduction}
The reconstruction of a symmetric $m$-tensor field $f$ from its ray transform known along all lines or along a sub-collection of lines in $\mathbb{R}^n$ is a very important problem. It has applications in several areas such as acoustic flow imaging using time-of-flight measurements \cite{SAJohnson1977}, non-destructive evaluation \cite{Sato1985}, deflection optical tomography to determine densities in supersonic expansions and flames \cite{Faris1988}, ocean tomography \cite{Munk1979,Howe1987}, reconstruction of velocity vector fields in blood vessels \cite{Jansson1995}, boundary rigidity problem \cite{Michel, Sharafutdinov_Book, SU1} and photoelasticity \cite{Sharafutdinov_Book}. Such a reconstruction problem can also be considered in the setting of Riemannian manifolds, where integrals of a tensor field are considered over geodesics, and has important applications in geophysics \cite{Sharafutdinov_Book, SU1, SU2, Uhlmann-Vasy}.

It is well known \cite{Sharafutdinov_Book} that any symmetric $m$-tensor field can be decomposed uniquely into a potential and  a solenoidal component. The potential component of a symmetric tensor field is always contained in the kernel of the ray transform. Thus one can only hope to recover the solenoidal part of a symmetric $m$- tensor field from its ray transform. Sharafutdinov \cite{Sharafutdinov_Book} gave an explicit inversion formula to reconstruct the solenoidal part of a symmetric $m$-tensor field from the knowledge of its ray transform over all lines in $\mathbb{R}^n$. For scalar functions ($m=0$), the reconstruction problem is a classical one in mathematical tomography. Beginning with the classical work of Radon \cite{Radon_Paper} and of Cormack \cite{Cormack_Paper}, there are several inversion results; see \cite{Helgason_Book} and the references therein. In the case of incomplete data, there are several inversion results  for different values of $m$ \cite{Tuy1983,Denisjuk1994,Schuster2000,Schuster2001,Katsevich2006,Sharafutdinov2007,
Palamodov2009,Svetov2012.,Katsevich2013,Denisiuk_2018}.  Vertgeim \cite{Vertgeim2000} gave a method to reconstruct the solenoidal part from the associated ray transform over all lines intersecting a fixed curve. Denisjuk \cite{Denisjuk_Paper} showed for $n=3$ and any $m$ that the solenoidal part of a tensor field can be recovered with an explicit formula if its ray transform is known over all lines intersecting a fixed curve satisfying the so called Kirillov-Tuy condition. The condition given on the curve in Denisjuk's work is less restrictive compared to the work of Vertgeim in the sense that required number of intersection points of the curve and any hyperplane that intersects support of $f$ is lesser.

The reconstruction of the solenoidal component  in a Riemannian geometry setting has been also extensively studied; see \cite{Pestov-Sharafutdinov,Sharafutdinov_Upperbound_Curvature, Sharafutdinov_Book, SU1, SU2, SU3, Uhlmann-Vasy,Stefanov-Uhlmann-Vasy}. Furthermore, approximate inversion results such as microlocal inversion formulas have been thoroughly investigated as well in several works \cite{GS, Greenleaf-Uhlmann-Duke1989, Boman-Quinto-Duke, Boman1993,SU1, Katsevich2002, Lan2003,Ramaseshan2004, SU2, SU3, VRS} starting from the fundamental work of Guillemin-Sternberg \cite{Guillemin-Sternberg-AJM} who first studied generalized Radon transform in the framework of Fourier integral operators.

Since the full recovery of a  symmetric $m$-tensor field is not possible from its ray transform, the question that one can ask is whether it is possible to reconstruct the full tensor field given some additional data. In this connection Sharafutdinov in  \cite{Sharafutdinov_Generalized_Tensor_Fields} proved a uniqueness result showing that full recovery of a symmetric $m$-tensor field $f$ is possible from the knowledge of the so-called first $m+1$ integral moments \cite[pp.78]{Sharafutdinov_Book}. In this connection, we also point out several related works \cite{Sparr-Strahlen, Holman2010, Braun-Hauck, Svetov2014, Rohit2017,Venky_Suman_Manna,Abhishek2018,Venky_Rohit_Francois}, where the authors show full reconstruction of a vector field from the knowledge of the usual (longitudinal) and transverse ray transforms.

Our focus in the current article is to provide an inversion method to uniquely recover a  vector field if its Doppler transform and the $1^{st}$ integral moment transform (see Definition \ref{Moments_Def} below) are known  along all lines intersecting a fixed curve $\g$. To the best of our knowledge, the explicit reconstruction of the solenoidal component of the vector field from restricted data is known so far only for the 3-dimensional case. In our work, we show the reconstruction of the solenoidal component of the vector field from restricted ray transform data in any dimension. Furthermore, the full reconstruction of the vector field from restricted integral moment transforms is new and as far as we are aware of, has not been considered previously. Our proof follows closely the techniques of Denisjuk \cite{Denisjuk_Paper} to recover the solenoidal part of a vector field explicitly from the knowledge the Doppler transform known along all lines intersecting a fixed curve satisfying the so-called Kirillov-Tuy condition in $\mathbb{R}^n$. Then we solve an elliptic boundary value problem to get the solenoidal part in the bounded domain. Finally we use the restricted first integral moment transform to get the potential part. 
\par  The article is organised as follows. In the Section \ref{Preliminaries}, we  give preliminaries and the statements of our main results. In Section \ref{Proof of Solenoidal part}, we prove some important lemmas which we  use to prove our main results in the same section.
\section{Preliminaries and statements of the main results}\label{Preliminaries}
We start with introducing some notations for spaces of functions and distributions which we are going to use throughout this article.
\begin{itemize}
\item $\mathcal{D}( \mathbb{R}^n;\mathbb{R}^n)$ :  the space of covector fields in $\Rb^n$ whose components are  compactly supported smooth functions.
\item $\mathcal{D}^\prime( \mathbb{R}^n; \mathbb{R}^n)$ :  the space of covector fields in $\Rb^n$ whose components are  distributions.
\item $\mathcal{E}( \mathbb{R}^n; \mathbb{R}^n)$ :  the space of covector fields in $\Rb^n$ whose components are smooth functions.
\item $\mathcal{E}^\prime( \mathbb{R}^n; \mathbb{R}^n)$ :  the space of covector fields in $\Rb^n$ whose components are  compactly supported smooth distributions.
\end{itemize}
In the similar manner, we may define spaces of covector fields in $\Rb^n$ with components in some Hilbert space $H^s$ or in some Lebesgue space $L^p$ as well.
% Let $\mathcal{D}(T^* \mathbb{R}^n)$ denote the space of covector fields in $\Rb^n$ whose components are  compactly supported smooth functions.
\begin{definition} \label{Doppler transform}
	The  Doppler transform of a covector field $f =( f_1, \dots , f_n) \in \mathcal{D}(\mathbb{R}^n;\mathbb{R}^n)$ is the function $Df$  on the space of oriented lines parametrized by $(x, \xi) \in T \mathbb{S}^{n-1}$ and defined as the following:
\begin{align}\label{eq:definition of Doppler transform}
Df(x,\xi) = \int_{\mathbb{R}} f_{i}(x+t\xi)\xi^{i}d t.
\end{align}
Here and elsewhere with repeating indices, Einstein summation convention will be assumed. With a little abuse of notation, we denote the $X$-ray transform of a function $f$ also by $Df$  which is defined as 
\begin{align}\label{eq:definition of X-ray transform}
Df(x,\xi) = \int_{\mathbb{R}} f(x+t\xi)d t.
\end{align}
\end{definition}
\begin{definition}[\cite{Sharafutdinov_Book}] \label{Moments_Def}
	The $1^{\mathrm{st}}$ integral moment transform $I^1f$ of a vector field $f \in \mathcal{D}(\mathbb{R}^n;\mathbb{R}^n)$ is defined as a function on $T \mathbb{S}^{n-1}$ by
	\begin{align}\label{eq:definition of integral moment}
	If(x,\xi) = \int_{\mathbb{R}}t  f_{i}(x+t\xi)\xi^{i}d t.
	\end{align}
\par Let  $\g$ be a fixed curve in $\mathbb{R}^n$ parametrized by $\g(s)$ for $ s \in [a, b] \subset \mathbb{R}$ and  let $ \mathcal{C}_{\g}$ denotes the $n$-dimensional subspace of $T\mathbb{S}^{n-1}$ which is just collection all lines intersecting this curve $\g$. Then $\mathcal{C}_{\g}$ can be parametrized by $(\g(s) , \xi)$ for $ t \in [0,1]$ and $ \xi \in \mathbb{S}^{n-1}$. The restrictions of the Doppler transform and of the first integral moment on $\mathcal{C}_{\g}$ is denoted by $D_{\g}$ (restricted Doppler transform)  and by $I_{\g}$ (restricted first integral moment) respectively.
\end{definition}
\par By duality, we can extend the definition of the Doppler transform $D$ as a map between $\mathcal{E}^\prime(\mathbb{R}^n;\mathbb{R}^n)$ (the space of vector fields whose components are compactly supported distributions) and $\mathcal{D}^\prime(T \mathbb{S}^{n-1})$ (the space of vector fields whose components are distributions) in the following way:
\begin{align}\label{eq: definition of ray transform for distributions}
\langle Df , \phi \rangle_{T \mathbb{S}^{n-1}} = \langle f, D^* \phi \rangle_{T^*\Rb^n}, \mbox{ for } f \in \mathcal{E}^\prime(\mathbb{R}^n;\mathbb{R}^n) \mbox{ and } \phi \in \mathcal{D}(T \mathbb{S}^{n-1}),
\end{align}
where $D^*$ is a map from $\mathcal{D}(T \mathbb{S}^{n-1})$ to $\mathcal{E}(\Rb^n;\Rb^n)$ which is defined by
$$(D^* \phi)_i(x) = \int_{\mathbb{S}^{n-1}} \xi^i\left( \int_{-\infty}^\infty \phi(x-t\xi,\xi)dt\right)d\xi.$$
\par In fact, we can also compute the dual of $D_{\g}$  using the duality in the following way:
\begin{align*}
\langle D_\gamma f , \phi \rangle_{\mathcal{C}_{\g}} &= \int_{\mathbb{S}^{n-1}} \int_0^1 D f( \g(s), \xi) \phi (\g(s), \xi) ds d\xi,\\
&=  \int_{\mathbb{S}^{n-1}} \int_0^1 \left(\int_\mathbb{R} f_i( \g(s)+t \xi)\xi^i dt\right) \phi (\g(s), \xi) ds d\xi,\\
&=  \int_{\mathbb{R}^{n}}f_i(x) \left[\int_0^1\frac{(x-\g(s))^i}{|x-\g(s)|^n} \phi \left(\g(s), \frac{x-\g(s)}{|x-\g(s)|}\right) ds\right] dx,\\
&= \langle f, D_{\g}^* \phi\rangle_{\mathbb{R}^n}.
\end{align*}
where $$ (D_{\g}^* \phi)_i(x) = \int_0^1\frac{(x-\g(s))^i}{|x-\g(s)|^n} \phi \left(\g(s), \frac{x-\g(s)}{|x-\g(s)|}\right) ds.$$ 
Also we would like to mention here that in the calculation of $D_{\g}^*$ we assumed that the curve lies outside the support of $f$. We use this dual to extend the definition of restricted Doppler transform for  vector fields whose components are compactly supported distributions.
Now we are going to put some conditions on the curve to make our inversion process work. 
%We say that a source trajectory
% ⊂ (R3\B3 )satisfies Tuy’s condition of order 3, if any plane that passes through B3 intersects
% in at least three points and those points are not located on a line. This means that for any
%p ∈ [−1, 1] and η ∈ S2 there exist at least three values si = si(p, η) ∈ , i = 1, 2, 3, such
%that
%y1 · η = y2 · η = y3 · η = p, yi := y(si(p, η)), i = 1, 2, 3, (2.1)
%and y1 − y3 and y2 − y3 are not collinear
 \begin{definition}[Kirillov-Tuy condition \cite{Denisiuk_2018}]
	Fix a domain $B \subset \Rb^n$. We say that a smooth curve $\g$ satisfies Kirillov-Tuy condition of order $(n-1)$, if for almost every hyperplane $H(\omega, p) = \{x \in \mathbb{R}^n|\langle \omega,x\rangle = p\}$ intersecting the domain $B$, there is a set of points $\g_1,\dots ,\g_{n-1} \in H(\omega, p)\cap \g$, which locally smoothly depends on $(\omega,p)$, such that for almost every point $x \in H(\omega, p)\cap B$ the vectors $x-\g_1, \dots , x-\g_{n-1}$ are  linearly independent. 
\end{definition}
%\begin{definition}[Kirillov-Tuy condition \cite{Denisjuk_Paper}]
%	Fix a domain $B \subset \Rb^n$. We say that a smooth curve $\g$ satisfies Kirillov-Tuy condition of order $ 1$, if for almost every hyperplane $H(\omega, p) = \{x \in \mathbb{R}^n|\langle \omega,x\rangle = p\}$ intersecting the domain $B$, there is a set of points $\g_1,\dots ,\g_{n-1} \in H(\omega, p)\cap \g$, which locally smoothly depends on $(\omega,p)$, such that for almost every point $x \in H(\omega, p)\cap B$ the vectors $x-\g_1, \dots , x-\g_{n-1}$ are  linearly independent. 
%\end{definition}
\begin{example}\cite{Vertgeim2000}
	Consider $\O = B(0, r)\subset \Rb^3$ and $\g$ is a union of 3 great circles on the sphere $S(0, R)$ with $R > \sqrt{3}r$. Then every hyperplane, passing through $\O$, contains in the intersection with $\g$ a nondegenerate triangle.
\end{example}
\begin{example}
	For $(n-1)$-linear independent vectors $v_1, \dots, v_{n-1}$ in $\Rb^n$, define a curve 
	$$ \g = \cup_{i=1}^{n-1}l_i$$
	where $l_i = \{tv_i:t\in \mathbb{R}\setminus 0\}$. Then almost every hyperplane in $\mathbb{R}^n$ will intersect $\g$ at $(n-1)$-distinct points and for almost every $x$ in that hyperplane,  $\{(x-\g_i) | \text{ for }1 \leq i \leq n-1\}$ forms a linearly independent set because the determinant of the matrix $\left((x-\g_i)\right)_{i=1}^{n-1}$ is a multinomial in the variable $x$ whose zero set has measure $0$ in $\mathbb{R}^n$. Hence this $\gamma$ satisfies the Kirillov-Tuy condition of order $(n-1)$.
\end{example}
For the convenience, we will use the following terminology also used by Vertgeim \cite{Vertgeim2000}: 
\begin{definition}[\cite{Vertgeim2000}]
	A curve $\g$ is said to be encompasses a bounded domain $\O$, if $\g \cap \O = \emptyset$ and for each $a \in \g$ and $ \xi \in \Rb^n\setminus \{0\}$ only one of the rays $\{a + t\xi: t\geq 0\}$ and $\{a + t\xi:t\leq 0\}$ can intersect $\O$.
\end{definition}
\begin{remark}
	If a curve $\g$ encompasses the support of a vector field $f$, then the Doppler transform restricted to this curve $\g$ becomes 
	$$D_{\g} f(a, \xi) = \int_0^\infty  f_i(a +t\xi) \xi^i dt.$$
\end{remark}
Our aim is to recover the full vector field when its Doppler transform and first integral moment transform are known for all lines intersecting a fixed curve $\g$ satisfying the Kirillov-Tuy condition of order $1$. To do this first we will recover the Saint-Venant operator of the vector field $f$ which will suffices to recover the solenoidal part of $f$ in $\Rb^n$ because of \cite[Theorem 2.5]{Denisjuk_Paper}.  We then use the solenoidal part to show that the potential part of $f$ can be recovered from the knowledge of the restricted first integral moment transform.

\par The following two decompositions of a vector field are well known from \cite{Sharafutdinov_Book}, the first decomposition is in the full space and the other one is in the bounded domains. These decompositions are valid for any order tensor fields but we state them here for vector fields only.
\begin{theorem}[Theorem 2.14.1 \cite{Sharafutdinov_Book}]\label{th:decomposition theorem in Euclidean space}
	Let $n \geq 2$. For every vector field $f \in \mathcal{E}^\prime(\mathbb{R}^n;\mathbb{R}^n)$ there exist uniquely determined fields $f_{\Rb^n}^s\in \mathcal{S}^\prime(\Rb^n;\mathbb{R}^n)$ and $ v_{\Rb^n} \in \mathcal{S}^\prime(\mathbb{R}^n)$ tending to zero at infinity such that 
	$$ f = f_{\Rb^n}^s + \D v_{\Rb^n}, \quad \quad \delta f^s_{\Rb^n} = 0, $$ 
	The fields $f^s_{\Rb^n}$ and $v_{\Rb^n}$ are smooth outside supp$(f)$ and satisfying the following estimates outside the supp$(f)$:
	\begin{align}\label{eq:estimates in decomposition}
	|f_{\Rb^n}^s(x)|\leq C(1+|x|)^{1-n}, \  |v_{\Rb^n}(x)| \leq C(1+|x|)^{2-n}, \  |\D v_{\Rb^n}(x)| \leq C(1+|x|)^{1-n}.
	\end{align}
	The fields $f_{\Rb^n}^s$ and $ v_{\Rb^n}$ are known as the solenoidal part and the potential part of $f$ respectively.
 \end{theorem}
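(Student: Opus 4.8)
\emph{Sketch of the argument.} The strategy is to construct the decomposition explicitly via the Fourier transform (equivalently, the Newtonian potential) and then to read off smoothness, the decay bounds, and uniqueness from the resulting formulas. Since $f\in\Ec'(\Rb^n;\Rb^n)$ has compact support, by the Paley--Wiener--Schwartz theorem each component of $\wh f$ is a $C^{\infty}$ function all of whose derivatives grow at most polynomially. Applying $\delta$ to the sought identity $f=f^s_{\Rb^n}+\D v_{\Rb^n}$ and using that $\delta\D=\Delta$ on functions, one is forced to take $v_{\Rb^n}$ solving $\Delta v_{\Rb^n}=\delta f$, i.e.\ $v_{\Rb^n}=E_n*\delta f$ with $E_n$ the fundamental solution of the Laplacian; on the Fourier side this reads, up to a fixed constant,
\[
\wh v_{\Rb^n}(\xi)=-\I\,\frac{\langle\xi,\wh f(\xi)\rangle}{|\xi|^{2}},\qquad
\wh{f^s_{\Rb^n}}(\xi)=\wh f(\xi)-\frac{\xi\,\langle\xi,\wh f(\xi)\rangle}{|\xi|^{2}},
\]
so that $\wh{f^s_{\Rb^n}}(\xi)$ is literally the orthogonal projection of $\wh f(\xi)$ onto the hyperplane $\xi^{\perp}$.

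First I would check that these formulas define objects in $\Sc'$. Expanding $\wh f(\xi)=\wh f(0)+O(|\xi|)$ near the origin, the singular part of $\wh v_{\Rb^n}$ is comparable to $\langle\xi,\wh f(0)\rangle/|\xi|^{2}$, which is homogeneous of degree $-1$ and hence locally integrable because $n\ge2$; away from the origin $\wh v_{\Rb^n}$ has only polynomial growth. Thus $\wh v_{\Rb^n}\in\Sc'(\Rb^n)$, and likewise $\wh{f^s_{\Rb^n}}\in\Sc'$, being bounded by $|\wh f(\xi)|$ near $0$. Taking inverse Fourier transforms then defines $v_{\Rb^n}\in\Sc'(\Rb^n)$ and $f^s_{\Rb^n}\in\Sc'(\Rb^n;\Rb^n)$ with $f=f^s_{\Rb^n}+\D v_{\Rb^n}$; and since $\langle\xi,\wh{f^s_{\Rb^n}}(\xi)\rangle$ is, as an $L^{1}_{\mathrm{loc}}$ function, identically zero, we get $\delta f^s_{\Rb^n}=0$.

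Next I would establish the regularity statement and the estimates \eqref{eq:estimates in decomposition} outside $\operatorname{supp}(f)$. There $\delta f=0$, so $\Delta v_{\Rb^n}=0$ and hence, by Weyl's lemma, $v_{\Rb^n}$ is real-analytic on $\Rb^n\setminus\operatorname{supp}(f)$ and $f^s_{\Rb^n}=-\D v_{\Rb^n}$ is smooth there. For the decay I would use $v_{\Rb^n}=E_n*\delta f$: since $\delta f$ is a first-order derivative of the compactly supported distribution $f$, that derivative can be commuted onto $E_n$, so that $v_{\Rb^n}$ is the convolution of the compactly supported $f$ with $\nabla E_n$, a kernel homogeneous of degree $1-n$ (this also disposes of the logarithm in the case $n=2$); hence $|v_{\Rb^n}(x)|\le C(1+|x|)^{1-n}\le C(1+|x|)^{2-n}$ for $x$ away from $\operatorname{supp}(f)$. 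Differentiating once more produces a kernel homogeneous of degree $-n$, giving $|\D v_{\Rb^n}(x)|,|f^s_{\Rb^n}(x)|\le C(1+|x|)^{-n}\le C(1+|x|)^{1-n}$ there; the bounds are trivial on the remaining compact part of $\Rb^n\setminus\operatorname{supp}(f)$, where both fields are smooth, and in particular $f^s_{\Rb^n}$ and $v_{\Rb^n}$ tend to zero at infinity.

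For uniqueness: if $f=f^s_1+\D v_1=f^s_2+\D v_2$ are two decompositions of the stated type, then $w:=v_1-v_2\in\Sc'(\Rb^n)$ satisfies $\Delta w=\delta\D w=\delta(f^s_2-f^s_1)=0$; a tempered distribution annihilated by $\Delta$ is a (harmonic) polynomial, and since $w\to0$ at infinity it must vanish, so $v_1=v_2$ and therefore $f^s_1=f^s_2$. The step that needs the most care is the behaviour at the frequency origin: the factor $|\xi|^{-2}$ must be reconciled both with the polynomial growth of $\wh f$ and all its derivatives and with membership in $\Sc'$, and --- in the physical picture --- with the failure of the bare Newtonian potential to decay when $n=2$; in each case the remedy is the same, namely to commute one derivative from $\delta f$ onto $E_n$ so that a compactly supported datum is convolved against a kernel homogeneous of strictly negative order.
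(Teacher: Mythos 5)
This theorem is not proved in the paper at all: it is quoted verbatim as Theorem 2.14.1 of Sharafutdinov's book, so there is no in-paper argument to compare yours against. Your sketch is essentially the standard (and the textbook's) proof: set $v_{\Rb^n}=E_n*\delta f$ (equivalently, project $\wh f(\xi)$ onto $\xi^{\perp}$ on the Fourier side), check temperedness via local integrability of the degree $-1$ singularity at $\xi=0$, read off $\delta f^s_{\Rb^n}=0$ as an a.e.\ identity, get smoothness off $\operatorname{supp}(f)$ from harmonicity of $v_{\Rb^n}$ there, obtain the decay by pairing the compactly supported $f$ with the kernel $\nabla E_n$ (homogeneous of degree $1-n$, which also disposes of the logarithm when $n=2$), and conclude uniqueness because a tempered distribution annihilated by $\Delta$ is a harmonic polynomial, hence zero if it tends to zero at infinity. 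This is correct at the level of detail offered. Two small remarks. First, your argument in fact yields the stronger bound $|v_{\Rb^n}(x)|\le C(1+|x|)^{1-n}$; this is genuinely available in the vector-field case $m=1$ (since $v_{\Rb^n}=(\nabla E_n)*f$ with $f$ itself compactly supported and the leading multipole term absent), and it of course implies the stated $(2-n)$ rate, which is the exponent that survives for general symmetric $m$-tensor fields in Sharafutdinov's formulation. Second, the clause ``the bounds are trivial on the remaining compact part'' should really be phrased as a statement about large $|x|$: for a compactly supported distribution of high order, $v_{\Rb^n}$ and $f^s_{\Rb^n}$ can blow up as $x$ approaches $\partial\operatorname{supp}(f)$, so the estimates \eqref{eq:estimates in decomposition} are to be read, as in the original source, as bounds outside a ball containing $\operatorname{supp}(f)$; your convolution argument gives exactly that, and nothing stronger can hold.
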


\begin{theorem}[Theorem 3.3.2, \cite{Sharafutdinov_Book}]\label{th: decomposition of tensor field on manifolds}
	Let $M$ be a compact Riemannian manifold with boundary. Let $k\geq1$ be an integer. Then for any $f\in H^k(M;\mathbb{R}^n)$ there exist uniquely determined $f_M^s\in H^k(M;\mathbb{R}^n)$ and $v_M\in H^{k+1}(M;\mathbb{R}^n)$ such that
	\begin{align}
	f = f_M^s + \D v_M \quad  \text{ with } \quad \delta f_M^s =0 \quad \text{ and } \quad v_M|_{\partial M}=0.
	\end{align}
	The estimates $$\|v_M\|_{k+1}\leq C\| \delta f \|_k \quad \mbox{ and } \|f_M^s\|_k\leq C\|f\|_k$$
	are valid where $C$ is a constant independent of $f$. In particular, $f_M^s$ and $v_M$ are smooth if $f$ is smooth.
\end{theorem}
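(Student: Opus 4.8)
Although this is Theorem 3.3.2 of \cite{Sharafutdinov_Book} and is therefore quoted rather than proved in the paper, the argument is short and I sketch the standard proof for completeness. The idea is to construct $v_M$ by solving a boundary value problem and then to \emph{define} the solenoidal part as the remainder $f-\D v_M$. Recall that for a scalar function $v$ on $M$ the composition $v\mapsto \delta\D v$ is, up to sign, the Laplace--Beltrami operator $\Delta$, which is second order and elliptic. So the plan is: first, solve the Dirichlet problem $\Delta v=\delta f$ in $M$ with $v|_{\PD M}=0$; second, put $f_M^s:=f-\D v_M$ and verify the listed properties; third, prove uniqueness by an energy identity; fourth, obtain smoothness by bootstrapping elliptic regularity.

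For the existence step, note $\delta f\in H^{k-1}(M)$ because $f\in H^k(M;\Rb^n)$ and $\delta$ is first order. Since the Dirichlet Laplacian on a compact manifold with boundary has trivial kernel (maximum principle, or integrate $v\,\Delta v$), Lax--Milgram in $H^1_0(M)$ together with elliptic regularity up to the boundary produces a unique $v_M\in H^{k+1}(M)$ with $\|v_M\|_{k+1}\le C\|\delta f\|_{k-1}\le C\|\delta f\|_k$, the constant depending only on $M$ and $k$. Setting $f_M^s:=f-\D v_M$ then gives $f_M^s\in H^k(M;\Rb^n)$ and $\delta f_M^s=\delta f-\delta\D v_M=\delta f-\Delta v_M=0$; moreover $\|f_M^s\|_k\le \|f\|_k+\|\D v_M\|_k\le \|f\|_k+C\|v_M\|_{k+1}\le C'\|f\|_k$, using $\|\delta f\|_{k-1}\le C\|f\|_k$.

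For uniqueness, suppose $f=f_1^s+\D v_1=f_2^s+\D v_2$ with both pairs satisfying the conditions. Then $w:=v_1-v_2$ obeys $\Delta w=\delta\D w=\delta f_2^s-\delta f_1^s=0$ in $M$ and $w|_{\PD M}=0$, and Green's formula (the boundary term vanishes since $w=0$ on $\PD M$) yields $\int_M|\D w|^2\,dV=\int_M(\Delta w)\,w\,dV=0$, hence $\D w\equiv 0$; thus $f_1^s=f_2^s$, and $w$ is locally constant with zero trace, so $v_1=v_2$. Finally, if $f$ is smooth then $\delta f$ is smooth, so $v_M\in H^{k+1}(M)$ for every $k$, whence $v_M\in C^\infty(M)$ by Sobolev embedding and $f_M^s=f-\D v_M$ is smooth as well.

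The only genuinely technical points are the regularity hypotheses on $\PD M$ under which the up-to-the-boundary elliptic estimate holds and the bookkeeping of Sobolev indices; apart from these the statement is a direct consequence of the well-posedness of the scalar Dirichlet problem, which is presumably why \cite{Sharafutdinov_Book} records it without proof.
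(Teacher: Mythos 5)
This theorem is quoted verbatim from Sharafutdinov's book and the paper offers no proof of its own, so there is no internal argument to compare against; your sketch is the standard one (reduce to the scalar Dirichlet problem $\delta\D v=\delta f$, $v|_{\partial M}=0$, set $f^s_M:=f-\D v_M$, prove uniqueness via the energy identity, and bootstrap elliptic regularity for smoothness), and it is correct for the vector-field case stated here, being exactly Sharafutdinov's proof specialized to $m=1$ where $\delta\D$ is the Laplace--Beltrami operator. Two cosmetic caveats: uniqueness of $v_M$ requires every connected component of $M$ to meet $\partial M$ (otherwise constants on closed components survive), and for vector fields the potential $v_M$ is a scalar function --- the ``$\mathbb{R}^n$'' in the paper's $H^{k+1}(M;\mathbb{R}^n)$ is an artifact of the general $m$-tensor statement --- which your argument implicitly and correctly assumes; note also that for general $m$ the reduction is not to the scalar Laplacian but to an elliptic boundary value problem for $\delta\D$ on symmetric $(m-1)$-tensors, which is where the real work in the cited reference lies.
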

\begin{definition}[\cite{Sharafutdinov_Book}]
	The Saint-Venant operator $W : \mathcal{E}(\mathbb{R}^n;\mathbb{R}^n)\rightarrow \mathcal{E}(T^*\mathbb{R}^n \otimes T^*\mathbb{R}^n)$ is defined by the equality
	\begin{align}
	(Wf)_{ij}= \frac{1}{2}\left(\frac{\partial f_i}{\partial x^j}- \frac{\partial f_j}{\partial x^i}\right)
	\end{align}
	where $\mathcal{E}(T^*\mathbb{R}^n \otimes T^*\mathbb{R}^n)$ is the space of 2-tensor fields in $\Rb^n$ whose components are smooth functions.
\end{definition}
%\begin{remark}\label{remark:Approximation of distribution}
%We can extend the definition of the Saint-Venant operator for distribution classes also. The  $\mathcal{H}^s(T^*\mathbb{R}^n)$ denotes the Hilbert space defined by Sharafutdinov \cite[Chapter 2]{Sharafutdinov_Book} as completion of Schwartz space $\mathcal{S}(T^*\mathbb{R}^n)$ with respect to norm $||f||_s = ||(1+|y|)^s\hat{f}(y)||_{L^2(T^*\mathbb{R}^n)}$ and $\mathcal{K}^s_K(T^*\mathbb{R}^n)$ denotes the subspace of $\mathcal{H}^s(T^*\mathbb{R}^n)$ having support inside $K$ subset of $\mathbb{R}^n$. Then the Saint-Venant operator $W:\mathcal{H}^s(T^*\mathbb{R}^n)\rightarrow \mathcal{H}^{s-1}(T^*\mathbb{R}^n \otimes T^*\mathbb{R}^n)$ is continuous. 
%\end{remark}
We will need these basic known facts of the Radon transform \cite{Helgason_Book}. For a function $f(x),\  x\in \mathbb{R}^n$, recall the Radon transform of $f$ is given by
\begin{equation*}
f^{\wedge} (\omega,p) = \int_{H(\omega, p)}f(x)ds
\end{equation*}
where $ds$ is the standard volume element on hyperplane $H(\omega, p)$. The dual of Radon transform is defined by
\begin{align*}
f^{\vee}(x) = \int_{x \in P}f(P)d\mu
\end{align*}
where $d \mu$ is the unique probability measure on the set of hyperplanes $P$ containing $x$ invariant under rotations about the point $x$. 
%
%
%\begin{align*}
%f^{\vee}(x) = \int_{\Sb^{n-1}}f(\omega,x \cdot \omega)d\o
%\end{align*}
%where $d\o$ is the usual measure on $\Sb^{n-1}$.
\par By $\mathcal{S}(\mathbb{R}^n)$ we denote the space of smooth function  on $\mathbb{R}^n$ that are rapidly decreasing with all its derivatives. A function $f\in \mathcal{S}(\mathbb{R}^n)$ can be explicitly reconstructed from the knowledge of its Radon transform with the following formula:
\begin{theorem}[\cite{Helgason_Book}]\label{theorem:radon}
	For $f\in \mathcal{S}(\mathbb{R}^n)$, we have
	$$cf(x) = \left(\Lambda \frac{d^{n-1}}{d p^{n-1}} f^{\wedge}(\omega,p)\right)^\vee(x)$$
	where $$ c= (-4\pi)^{\frac{n-1}{2}}\Gamma(n/2)/\Gamma(1/2)$$
	and $$ (\Lambda f)(\omega,p)= \left \{
	\begin{array}{ll}
	f(\omega,p)  &  n \mbox{ odd, }\\
	\mathcal{H}_pf(\omega,p) &  n \mbox{ even  }
	\end{array}                                         \right. $$
	$\mathcal{H}_p$ denotes the Hilbert transform with respect to variable $p$.
\end{theorem}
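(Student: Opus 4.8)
Since this inversion formula is classical and is quoted essentially verbatim from Helgason's monograph, a "proof" here amounts to citing \cite{Helgason_Book}; nevertheless, the argument I would reproduce is the standard Fourier-analytic one. The plan is: (i) reduce the $n$-dimensional Fourier transform of $f$ to the one-dimensional Fourier transform of $f^{\wedge}$ in $p$ via the projection--slice theorem; (ii) carry out Fourier inversion in polar coordinates; (iii) recognize the radial weight $|s|^{n-1}$ as the $p$-Fourier symbol of $\Lambda\,\frac{d^{n-1}}{dp^{n-1}}$ up to an explicit constant; and (iv) recognize the remaining angular integral as the dual Radon transform $(\cdot)^{\vee}$. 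The constant $c$ is then whatever multiplicative factor is left over.

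\textbf{Steps (i)--(ii).} For $f\in\mathcal S(\mathbb R^n)$ set $\widetilde{f^{\wedge}}(\omega,s):=\int_{\mathbb R}f^{\wedge}(\omega,p)\,e^{-isp}\,dp$. Unfolding $f^{\wedge}(\omega,p)$ as an integral over $H(\omega,p)$ and using $\mathbb R^n=\bigcup_{p}H(\omega,p)$ together with Fubini (legitimate since $f\in\mathcal S$) gives the slice identity $\widetilde{f^{\wedge}}(\omega,s)=\widetilde f(s\omega)$, where $\widetilde f$ is the full $n$-dimensional Fourier transform of $f$. Inserting this into the inversion formula $f(x)=(2\pi)^{-n}\int_{\mathbb R^n}\widetilde f(\zeta)\,e^{i\langle x,\zeta\rangle}\,d\zeta$, passing to polar coordinates $\zeta=s\omega$ with $d\zeta=s^{n-1}\,ds\,d\omega$, and folding the half-line $s>0$ onto the pair $\omega,-\omega$ (surface measure on $\mathbb S^{n-1}$ is rotation invariant) produces
$$f(x)=\frac{1}{2(2\pi)^{n}}\int_{\mathbb S^{n-1}}\left(\int_{\mathbb R}|s|^{n-1}\,\widetilde{f^{\wedge}}(\omega,s)\,e^{is\langle x,\omega\rangle}\,ds\right)d\omega .$$

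\textbf{Steps (iii)--(iv).} The operator $\frac{d^{n-1}}{dp^{n-1}}$ has $p$-Fourier symbol $(is)^{n-1}$, and $\Lambda$ has symbol $1$ for $n$ odd and the Hilbert-transform symbol ($\propto \operatorname{sgn} s$) for $n$ even; in either parity the composition $\Lambda\frac{d^{n-1}}{dp^{n-1}}$ has symbol $\kappa\,|s|^{n-1}$ with $|\kappa|=1$ an explicit constant. Hence, by one-dimensional Fourier inversion, the inner integral above equals $\kappa^{-1}\cdot 2\pi\,\big(\Lambda\tfrac{d^{n-1}}{dp^{n-1}}f^{\wedge}\big)(\omega,\langle x,\omega\rangle)$. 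Finally, since $x\in H(\omega,p)\iff p=\langle x,\omega\rangle$, the rotation-invariant probability measure on the hyperplanes through $x$ is the normalized surface measure on $\mathbb S^{n-1}$, so $g^{\vee}(x)=|\mathbb S^{n-1}|^{-1}\int_{\mathbb S^{n-1}}g(\omega,\langle x,\omega\rangle)\,d\omega$. Substituting and collecting the factors $(2\pi)^{-n}$, $\tfrac12$, $2\pi$, $\kappa$, $|\mathbb S^{n-1}|=2\pi^{n/2}/\Gamma(n/2)$ and $\Gamma(1/2)=\sqrt{\pi}$ yields precisely $c\,f(x)=\big(\Lambda\tfrac{d^{n-1}}{dp^{n-1}}f^{\wedge}\big)^{\vee}(x)$ with $c=(-4\pi)^{(n-1)/2}\Gamma(n/2)/\Gamma(1/2)$.

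\textbf{Main obstacle.} The analytic content (Fubini, polar coordinates, one- and $n$-dimensional Fourier inversion on $\mathcal S$) is routine; the only delicate point is the bookkeeping in steps (iii)--(iv): pinning down the exact phase $\kappa$ of the symbol of $\Lambda\frac{d^{n-1}}{dp^{n-1}}$ --- which in the even-dimensional case is sensitive to the chosen normalization of $\mathcal H_p$ --- and then combining it with the Fourier-transform normalization and the value of $|\mathbb S^{n-1}|$ so that everything collapses to the stated $c$. An alternative route, also due to Helgason, is to first establish $R^{*}Rf=c_n\,|x|^{1-n}*f$ and then invert this Riesz potential, which trades the multiplier bookkeeping for a homogeneous-distribution convolution computation.
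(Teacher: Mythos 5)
Your proposal is sound: the Fourier--slice identity, the polar-coordinate folding with the factor $\tfrac12$ and the weight $|s|^{n-1}$, the identification of $|s|^{n-1}$ (up to a unimodular constant) with the $p$-symbol of $\Lambda\,\frac{d^{n-1}}{dp^{n-1}}$, and the reading of the remaining spherical average as $|\mathbb S^{n-1}|^{-1}$ times the dual transform are exactly the standard ingredients, and collecting the factors $2(2\pi)^{n-1}/(\kappa|\mathbb S^{n-1}|)$ with $|\mathbb S^{n-1}|=2\pi^{n/2}/\Gamma(n/2)$ and $\Gamma(1/2)=\sqrt{\pi}$ does reproduce the stated $c=(-4\pi)^{(n-1)/2}\Gamma(n/2)/\Gamma(1/2)$ (at least for $n$ odd; for $n$ even the sign of $\kappa$ and the meaning of $(-4\pi)^{(n-1)/2}$ hinge on the chosen normalization of $\mathcal H_p$, as you note). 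There is nothing to compare against in the paper itself: this theorem is imported verbatim from Helgason with no proof given, so your write-up is an independent justification of a cited classical result. It is worth remarking that Helgason's own proof in the cited monograph proceeds via the identity $R^{*}Rf=c_n\,|x|^{1-n}*f$ and inversion of the Riesz potential --- the alternative you mention --- whereas your main line is the Fourier-multiplier computation closer to Natterer's presentation; the two are equivalent, the multiplier route making the parity role of $\Lambda$ transparent and the Riesz-potential route avoiding the symbol bookkeeping you flag as the only delicate point.
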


Recall the following property of the Radon transform:
\begin{equation}\label{eq:property of radon transform}
\left(a_i\frac{\partial}{\partial x^i}f(x)\right)^{\wedge}(\omega,p) = \langle \omega, a \rangle \frac{\partial}{\partial p} f^{\wedge}(\omega, p).
\end{equation}

We now state the main theorem of this article 
\begin{theorem}\label{th:2}
	Let $B$ be a bounded domain in $\Rb^n$  and let $f \in \mathcal{E}^\prime(\mathbb{R}^n;\mathbb{R}^n)$ be a vector field in $\mathbb{R}^n$  supported in $B$. Assume that a curve $\g \subset \mathbb{R}^n$ encompasses $B$ and satisfies the  Kirillov-Tuy condition of order $(n-1)$. Suppose the restricted Doppler transform of $f$, $D_{\g}f$, is known along all lines intersecting the curve $\g$, then the solenoidal part $f^s_{\Rb^n}$  can be determined uniquely.
	\end{theorem}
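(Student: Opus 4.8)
The plan, following Denisjuk, is to reconstruct first the Saint--Venant tensor $Wf$ on all of $\Rb^n$ from the restricted Doppler data $D_\g f$, and then to invoke \cite[Theorem~2.5]{Denisjuk_Paper}, by which $f^s_{\Rb^n}$ is uniquely (and explicitly) determined by $Wf$. The reduction is legitimate because $W$ annihilates potential fields, $W(\D v)=0$, so $Wf=Wf^s_{\Rb^n}$. To pin down $Wf$ it suffices to recover the hyperplane Radon transform $(Wf)^{\wedge}_{ij}(\omega,p)$ of each component for almost every $(\omega,p)$, since the Radon transform is injective on compactly supported distributions (Theorem~\ref{theorem:radon} and its standard distributional extension) and $Wf$ has compact support.

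The bridge between the data and $Wf$ is an elementary identity: as $(\partial_j f_i)(a+t\xi)\,\xi^j=\frac{\D}{\D t}\big(f_i(a+t\xi)\big)$ and $\g$ lies outside $\mathrm{supp}(f)$, so that $f_i(\g(s))=0$, integration along the ray from $a=\g(s)$ gives
\[
\int_{\Rb}(Wf)_{ij}(a+t\xi)\,\xi^j\,\D t=-\tfrac12\,\frac{\partial}{\partial a^i}\,Df(a,\xi),\qquad a=\g(s).
\]
Differentiating $D_\g f(\g(s),\xi)$ in $s$ thus yields the contraction of the left-hand side with $\dot\g(s)$; in other words, for each $s$ the data determine the cone-beam longitudinal transform, from the vertex $\g(s)$, of the covector field $y\mapsto (Wf)_{ij}(y)\,\dot\g^{\,i}(s)$. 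Now fix a hyperplane $H=H(\omega,p)$ meeting $B$; for almost every such $H$ the Kirillov--Tuy condition furnishes $\g_1,\dots,\g_{n-1}\in H\cap\g$, depending smoothly on $(\omega,p)$, with $y-\g_1,\dots,y-\g_{n-1}$ linearly independent for a.e.\ $y\in H\cap B$. For a vertex $\g_k$ the rays in the directions $\xi\perp\omega$ lie in $H$ and, by the encompassing hypothesis (which fixes the sign of the ray parameter), sweep out $H\cap B$; under the change of variables $y=\g_k+t\xi$, with $\D t\,\D\xi=|y-\g_k|^{-(n-2)}\D y$ on $H$, a Grangeat-type manipulation (a transversal $\xi$-differentiation of the cone-beam transform of $Wf$, followed by integration over $\{\xi\perp\omega\}$) becomes a weighted Radon transform over $H$ of the components of $Wf$, the weight involving $|y-\g_k|$ and the radial direction $(y-\g_k)/|y-\g_k|$.

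The heart of the proof — to be carried out in the lemmas of Section~\ref{Proof of Solenoidal part} — is to strip off these weights and obtain the unweighted $(Wf)^{\wedge}_{ij}(\omega,p)$. This is done by combining the data from all $n-1$ vertices and differentiating in $p$: since $y-\g_1,\dots,y-\g_{n-1}$ are linearly independent for a.e.\ $y\in H$, the $n-1$ ``radial'' pieces of data at $y$ determine all $n-1$ in-hyperplane components of the relevant tensor, while the $p$-derivatives supply the algebraic factors that cancel the powers of $|y-\g_k|$; a final integration over $H$ then gives $(Wf)^{\wedge}_{ij}(\omega,p)$. The count is consistent because, by \eqref{eq:property of radon transform}, $(Wf)^{\wedge}(\omega,p)=-\tfrac12\,\omega\wedge\partial_p f^{\wedge}(\omega,p)$, so the tensor to be found has exactly $n-1$ independent components, those of the form $\omega\wedge(\text{in-hyperplane vector})$, which are precisely what the $\xi\perp\omega$ data see. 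What remains is routine: justifying the differentiations under the integral sign and the convergence of all ray and subsphere integrals (using the compact support of $f$ and the decay bounds \eqref{eq:estimates in decomposition}), and checking the measurability and local smoothness of $(\omega,p)\mapsto\g_k(\omega,p)$, so that the reconstructed $(Wf)^{\wedge}_{ij}$ is a genuine Radon transform; inverting it componentwise gives $Wf$ on $\Rb^n$, and then \cite[Theorem~2.5]{Denisjuk_Paper} returns $f^s_{\Rb^n}$. I expect the main obstacle to be precisely this weight-removal step — turning the radially weighted, cone-beam-derived data from the $n-1$ Kirillov--Tuy vertices into the honest Radon transform of $Wf$ — together with the differentiation-and-interchange estimates it requires.
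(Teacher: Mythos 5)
Your overall strategy (recover the Saint--Venant tensor $Wf$, then invoke \cite[Theorem 2.5]{Denisjuk_Paper}) is the right one, and your ``bridge'' identity is correct as stated. But the part you yourself defer --- turning the restricted cone-beam data into the honest Radon transform of $Wf$ --- is precisely the content of the theorem, and the sketch you give for it does not work as described. First, your reduction differentiates $D_\g f(\g(s),\xi)$ in $s$, so the quantity you control is the cone-beam Doppler transform of the contracted field $y\mapsto (Wf)_{ij}(y)\,\dot\g^i(s_k)$; the Kirillov--Tuy hypothesis constrains the chords $y-\g_k$, but says nothing about the tangents $\dot\g(s_k)$ (they could, e.g., all be parallel), so the family of contractions you obtain need not carry the components of $Wf$ you require. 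Second, and more fundamentally, the step ``the $n-1$ radial pieces of data at $y$ determine all $n-1$ in-hyperplane components'' is pointwise linear algebra applied to data that are not pointwise: after the Grangeat-type manipulation you only possess weighted integrals over $H(\omega,p)$, with weights $|y-\g_k|^{-m}$ and radial directions varying with $y$, and one cannot ``solve at $y$'' inside an integral. Some identity must hold pointwise in the integrand before integration, and you have not supplied one; asserting that the $p$-derivatives ``supply the algebraic factors that cancel the powers of $|y-\g_k|$'' is exactly the claim that needs proof.

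For comparison, the paper's mechanism avoids both problems by never differentiating along the curve and by working with $f$ itself rather than with contractions of $Wf$: the Lagrange-type interpolation (Proposition \ref{th:1}) is applied pointwise to $f(x)$ for any in-plane vector $\vi$, giving $\langle f(x),\vi\rangle=\sum_k\langle f(x),x-\g_k\rangle\,\Delta_k(\vi)/\Delta$, and the computation showing $\Delta_k(\vi)/\Delta=w(\xi_k)+t_k\tilde w(\xi_k)$ is the crucial point: these are exactly the weights $t^k w(\xi)$, $k=1,2$, whose $\partial_p^{n-1}$-differentiated Radon transforms Lemma \ref{lemma:2.2} extracts from the cone-beam Doppler data via the operator $L=\omega_i\partial_{\xi_i}$ (plus the correction $\tilde L$ accounting for the motion of the vertex when differentiating in $p$). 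That yields $\partial_p^{n-1}\langle f,\vi\rangle^{\wedge}(\omega,p)$ for in-plane $\vi$ (Lemma \ref{lemma:2.3}), and only then is Denisjuk's identity (Lemma \ref{lemma:2.1}, Corollary \ref{corollary:2.1}) used to convert these $p$-derivatives of $f^{\wedge}$ into $\partial_p^{n-1}$ of $\langle Wf,(\xi,\eta)\rangle^{\wedge}$, which the Radon inversion formula (Theorem \ref{theorem:radon}) then inverts; the distributional case follows by density as in the paper. To repair your argument you would need either to import this interpolation identity (in which case your $s$-differentiation and the passage through contractions of $Wf$ become unnecessary detours), or to prove an analogous pointwise identity for the $\dot\g$-contracted fields together with a hypothesis on the tangents --- neither of which is in your proposal, so as it stands the central step is a genuine gap.
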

Next we use the other decomposition (i.e. the decomposition in the bounded domain) of the unknown vector field to construct an elliptic boundary value problem. Using the solution of this constructed elliptic boundary value problem we construct the solenoidal part of $f$ in the bounded domain which we use with the integral moment transform data to get the potential part of $f$.
%With the help of this result, we obtain the full recovery of the vector field $f$ from the first $2$ integral moments, $If$ and $I^{1}f$. 
\begin{theorem}\label{Moments_thm}
Assume $f$, $B$ and $\g$ are same as in Theorem \ref{th:2}. Then the potential part of $f$ can be determined uniquely if its restricted first integral moment transform $I_{\g}f$ and $f^s_{\Rb^n}$ are known.
\end{theorem}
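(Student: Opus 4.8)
The plan is to exploit the already-recovered field $f^s_{\Rb^n}$ (Theorem \ref{th:2}) together with the two decomposition theorems, reducing the recovery of the potential part of $f$ to the inversion of the restricted X-ray transform of a single \emph{scalar} function --- the $0$-tensor version of the problem, handled by the same Kirillov--Tuy machinery. Fix once and for all a bounded convex domain $M$ (a ball, say) with $\overline{B}\subseteq M$ that is encompassed by $\g$ and for which $\g$ satisfies the Kirillov--Tuy condition of order $(n-1)$; such an $M$ exists since $\g$ lies at positive distance from $B$. Recall the two decompositions of $f$: globally, $f = f^s_{\Rb^n} + \D v_{\Rb^n}$ (Theorem \ref{th:decomposition theorem in Euclidean space}), and on $M$, $f|_M = f^s_M + \D v_M$ with $\delta f^s_M = 0$ in $M$ and $v_M|_{\partial M}=0$ (Theorem \ref{th: decomposition of tensor field on manifolds}).

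The first step is to recover $v_{\Rb^n}$ outside $B$ and then recover $f^s_M$ via an elliptic boundary value problem. Since $f$ is supported in $B$ we have $\D v_{\Rb^n} = -f^s_{\Rb^n}$ on $\Rb^n\setminus B$, with the right-hand side known by Theorem \ref{th:2}; integrating this gradient along paths to infinity (path-independently, using the decay $|v_{\Rb^n}(x)|\le C(1+|x|)^{2-n}$ of \eqref{eq:estimates in decomposition}) determines $v_{\Rb^n}$ on $\Rb^n\setminus\overline B$, in particular its trace on $\partial M$. Set $h := v_{\Rb^n}-v_M$ on $M$. Subtracting the two decompositions gives $\D h = f^s_M - f^s_{\Rb^n}$ on $M$; applying $\delta$ and using $\delta f^s_M = \delta f^s_{\Rb^n}=0$ shows $h$ is harmonic in $M$, while $h|_{\partial M} = v_{\Rb^n}|_{\partial M}$ is known. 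Solving this Dirichlet problem determines $h$, and hence $f^s_M = f^s_{\Rb^n}+\D h$ throughout $M$.

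The second step recovers $v_M$ from the moment data. Since $\g$ encompasses $B\supseteq\operatorname{supp}f$ we have $I_\g f(\g(s),\xi)=\int_0^\infty t\, f_i(\g(s)+t\xi)\xi^i\,dt$, and, because $\operatorname{supp}f\subset M$ and $f|_M = f^s_M+\D v_M$, this equals $\int_{t_1}^{t_2} t\,(f^s_M+\D v_M)_i(\g(s)+t\xi)\xi^i\,dt$, where $[t_1,t_2]$ is the parameter interval of the ray that lies in the convex set $M$. In the $\D v_M$ term one has $(\D v_M)_i(\g(s)+t\xi)\xi^i = \tfrac{d}{dt}\,v_M(\g(s)+t\xi)$, and integrating by parts the boundary contributions at $t_1$ and $t_2$ vanish because $v_M=0$ on $\partial M$; hence
\[
I_\g f(\g(s),\xi) \;=\; \int_{t_1}^{t_2} t\,(f^s_M)_i(\g(s)+t\xi)\xi^i\,dt \;-\; \int_{t_1}^{t_2} v_M(\g(s)+t\xi)\,dt .
\]
The first term on the right is computable from the field $f^s_M$ determined above, so the restricted X-ray transform $D_\g v_M$ of the scalar function $v_M$ (extended by zero outside $M$; it is supported in $\overline M$ and, $\g$ encompassing $M$, only the forward ray contributes) is now known. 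Invoking the inversion of the restricted X-ray transform of a scalar function under the Kirillov--Tuy condition of order $(n-1)$ --- the $0$-tensor case of the method behind Theorem \ref{th:2}, ultimately due to \cite{Vertgeim2000, Denisjuk_Paper} --- recovers $v_M$, hence $\D v_M$, hence $f = f^s_M+\D v_M$ on $M$ and therefore on all of $\Rb^n$. The potential part $\D v_{\Rb^n}=f-f^s_{\Rb^n}$ (and then $v_{\Rb^n}$ itself, by integration with the decay condition) is thereby determined.

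The main obstacle I anticipate is the last step: making the integration by parts rigorous at the Sobolev regularity of $v_M$ provided by Theorem \ref{th: decomposition of tensor field on manifolds} (elliptic regularity of the decomposition in $M\setminus B$, where $f\equiv 0$, makes $v_M$ smooth near $\partial M$, which is exactly what is needed for the vanishing of the boundary terms, with a density argument absorbing any roughness of $v_M$ inside $B$), and correctly reducing to --- and citing --- the scalar restricted-X-ray inversion. The manipulations with the two Hodge-type decompositions and the exterior reconstruction of $v_{\Rb^n}$ are then routine.
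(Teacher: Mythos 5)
Your proposal is correct and follows essentially the same route as the paper: recover $v_{\Rb^n}$ outside the support by integrating $f^s_{\Rb^n}$ along rays, compare the two decompositions so that the difference of potentials is harmonic with known Dirichlet data, solve that boundary value problem to get the interior solenoidal part, and then use the first moment transform with an integration by parts (boundary terms killed by the vanishing of the interior potential on the boundary) to reduce to the restricted X-ray transform of a scalar function, inverted by the Kirillov--Tuy machinery (the paper's Corollary \ref{corollary:function recovery}). The only deviation is your enlargement to a convex $M\supset B$, which is unnecessary --- the paper works on $B$ itself, extending $v_B$ by zero across $\partial B$ --- and which quietly assumes that $\g$ still encompasses and satisfies the Kirillov--Tuy condition for the larger set $M$, a claim not guaranteed by the hypotheses; taking $M=B$ removes this issue.
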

	\begin{remark}
Although, we stated our theorems for any $n \geq 2$ but these theorems can be reformulated for $n=2$ case as  a formally determined problems. The arguments with full data are relatively simpler for the case $n=2$  and have been already studied in \cite{Venky_Suman_Manna}. Therefore, we present proofs only for the case $n \geq 3$.
	\end{remark}
\section{Proofs of Main theorems}\label{Proof of Solenoidal part}
First we are going to prove the Theorem \ref{th:2} for the smooth case following closely the ideas of Denisjuk \cite{Denisjuk_Paper}. Then we use a density argument to prove this theorem for the distribution case. We first start with a Lagrange type interpolation formula in any dimension. 
\begin{proposition}\label{th:1}
	Let the vectors $v_{1},\dots, v_{n-1}\in \Rb^{n-1}$ be linearly independent.  Let the real numbers $y_{1},\dots, y_{n-1}$ be given. Then there exists a unique linear homogeneous polynomial $F(x)=F(x_{1},\dots, x_{n-1})$ such that $F(v_{i})=y_{i}$ for $1\leq i\leq n-1$.
\end{proposition}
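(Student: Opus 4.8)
The plan is to identify a linear homogeneous polynomial in $n-1$ variables with a linear functional on $\Rb^{n-1}$, which turns the statement into a question about an invertible square linear system. Write the unknown polynomial as $F(x)=\sum_{j=1}^{n-1}c_j x_j=\langle c,x\rangle$ for a coefficient vector $c=(c_1,\dots,c_{n-1})\in\Rb^{n-1}$; every linear homogeneous polynomial has exactly this form. The interpolation requirements $F(v_i)=y_i$ for $1\le i\le n-1$ then become $\langle c,v_i\rangle=y_i$, i.e. $Mc=y$, where $M$ is the $(n-1)\times(n-1)$ matrix whose $i$-th row is $v_i^{\mathrm T}$ and $y=(y_1,\dots,y_{n-1})^{\mathrm T}$.

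Next I would invoke the hypothesis: linear independence of $v_1,\dots,v_{n-1}$ in $\Rb^{n-1}$ is precisely the statement that $M$ has full rank, hence is invertible. Consequently $c=M^{-1}y$ is the unique solution of the system, and $F(x)=\langle M^{-1}y,x\rangle$ is the unique linear homogeneous polynomial satisfying all $n-1$ constraints. Uniqueness can also be seen directly without the matrix: if $F_1$ and $F_2$ both satisfy the conditions, then $F_1-F_2$ is a linear homogeneous polynomial vanishing on the basis $v_1,\dots,v_{n-1}$ of $\Rb^{n-1}$, and a linear functional vanishing on a basis is identically zero.

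There is essentially no real obstacle; the only point requiring care is bookkeeping, namely pinning down ``linear homogeneous polynomial'' as $x\mapsto\langle c,x\rangle$ with no constant term, so that the count of $n-1$ unknowns against $n-1$ equations matches and $M$ is exactly the matrix assembled from the $v_i$. If an explicit Lagrange-type formula is desired, paralleling the classical one, one can instead exhibit the dual basis: let $\ell_1,\dots,\ell_{n-1}$ be the linear functionals determined by $\ell_i(v_j)=\delta_{ij}$, which exist and are unique because $\{v_i\}$ is a basis, and put $F=\sum_{i=1}^{n-1}y_i\ell_i$. This displays the interpolation structure transparently and yields existence and uniqueness simultaneously.
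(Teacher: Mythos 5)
Your proof is correct and takes essentially the same route as the paper: uniqueness is argued identically (a linear functional vanishing on the basis $v_1,\dots,v_{n-1}$ is zero), and your dual-basis/matrix-inversion construction is exactly the paper's explicit Lagrange formula $F(x)=\sum_i y_i\,\Delta_i(x)/\Delta$, which is just Cramer's rule for your system $Mc=y$. The only difference is presentational: the paper records the determinant expression explicitly because that concrete form of $\Delta_i(\textit{\textbf{v}})/\Delta$ is reused later in the proof of Lemma \ref{lemma:2.3}.
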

\begin{proof}
	We first prove uniqueness before constructing such polynomial. Let us assume that there are two such linear polynomials $F$ and $G$. Then we have $(F-G)(v_i) = 0$ for $i = 1,\dots, (n-1)$. But independence of $v_i$'s implies $F=G$.
	
	Next we construct the polynomial of the following form 
	\[
	F(x)=\sum\limits_{i=1}^{n-1} y_{i} l_{i}(x)
	\]
	with each $l_{i}$ is a homogeneous linear polynomial in $x$ and defined by
	\[
	l_{i}(x)= \frac{\Delta_{i}(x)}{\Delta}
	\]
	where
	\[
	\Delta_{i}(x)=\det 
	\begin{pmatrix}
	&  v_{11} & \dots & v_{1 n-1}\\
	& \vdots & \ & \vdots \\
	& v_{i-1 1} & \dots & v_{i-1 n-1}\\
	&  x_{1} & \dots &  x_{n-1}\\
	& v_{i+1 1} & \dots & v_{i+1 n-1}\\
	& \vdots& \ & \vdots \\
	& v_{n-11} &\dots & v_{n-1n-1}
	\end{pmatrix}
	\]
	and 
	\[
	\Delta= \det 
	\begin{pmatrix}
	&  v_{11} & \dots & v_{1 n-1}\\
	& \vdots& \ & \vdots\\
	&  v_{n-11} & \dots & v_{n-1n-1}
	\end{pmatrix}.
	\]

	Then, we have 
	\[
	F(x)=\sum\limits_{i=1}^{n-1} y_{i} l_{i}(x)
	\]  is a homogeneous linear polynomial in $x$ satisfying $F(v_{i})=y_{i}$ for each $1\leq i \leq n-1$.
\end{proof}
\par If  $\xi$ and $\eta$ are two arbitrary vectors in $ \mathbb{R}^n$, then the action of $Wf$ on any such pair is defined by $ \langle Wf , (\xi,\eta )\rangle =  \langle Wf , \xi\otimes \eta \rangle = (Wf)_{ij}\xi^{i}\eta^{j}$. As in \cite{Denisjuk_Paper}, we use the Radon inversion formula given in Theorem \ref{theorem:radon} to get $Wf$. With this in mind, we compute $\frac{\partial}{\partial p} \langle Wf , (\xi,\eta)\rangle ^{\wedge}(\omega,p)$ for an arbitrary pair of vectors  $\xi, \eta \in \mathbb{R}^n$.
\begin{lemma}[\cite{Denisjuk_Paper}]\label{lemma:2.1}
	Let the vectors $\xi, \eta \in \mathbb{R}^n$ be given. Then for any vector field $f \in\mathcal{D}(\mathbb{R}^n;\mathbb{R}^n)$
	\begin{equation}\label{eq:Saint venant}
	\langle  Wf , (\xi,\eta )\rangle ^{\wedge}(\omega,p)
	=\frac{1}{2}\left(\langle \omega, \eta\rangle \frac{\partial }{\partial p}\langle f, \xi\rangle^{\wedge}(\omega,p)-\langle \omega, \xi\rangle \frac{\partial }{\partial p}\langle f, \eta\rangle^{\wedge}(\omega,p)\right).
	\end{equation}
\end{lemma}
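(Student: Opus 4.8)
The plan is to derive \eqref{eq:Saint venant} directly from the definition of the Saint--Venant operator together with the elementary differentiation rule \eqref{eq:property of radon transform} for the Radon transform. First I would unfold the left-hand side. Since $\xi$ and $\eta$ are fixed constant vectors, the definition of $W$ gives
\[
\langle Wf,(\xi,\eta)\rangle = (Wf)_{ij}\,\xi^i\eta^j = \frac{1}{2}\left(\xi^i\eta^j\frac{\partial f_i}{\partial x^j} - \xi^i\eta^j\frac{\partial f_j}{\partial x^i}\right).
\]
Because $\xi^i$ and $\eta^j$ are constants, each summand is a directional derivative of a scalar function: $\xi^i\eta^j\,\partial f_i/\partial x^j = \eta^j\,\partial\langle f,\xi\rangle/\partial x^j$ and $\xi^i\eta^j\,\partial f_j/\partial x^i = \xi^i\,\partial\langle f,\eta\rangle/\partial x^i$, where $\langle f,\xi\rangle = f_i\xi^i$ and $\langle f,\eta\rangle = f_j\eta^j$ are ordinary compactly supported smooth functions.

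Next I would take the Radon transform of each of these two scalars, applying \eqref{eq:property of radon transform} with $a=\eta$ in the first case and $a=\xi$ in the second:
\[
\left(\eta^j\frac{\partial}{\partial x^j}\langle f,\xi\rangle\right)^{\wedge}(\omega,p) = \langle\omega,\eta\rangle\,\frac{\partial}{\partial p}\langle f,\xi\rangle^{\wedge}(\omega,p),
\]
\[
\left(\xi^i\frac{\partial}{\partial x^i}\langle f,\eta\rangle\right)^{\wedge}(\omega,p) = \langle\omega,\xi\rangle\,\frac{\partial}{\partial p}\langle f,\eta\rangle^{\wedge}(\omega,p).
\]
Substituting these into the Radon transform of the displayed expression for $\langle Wf,(\xi,\eta)\rangle$ and using linearity of $(\,\cdot\,)^{\wedge}$ yields exactly \eqref{eq:Saint venant}.

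I do not expect a genuine obstacle: the statement is essentially a bookkeeping identity, whose only content is that the antisymmetric derivative combination defining $Wf$ becomes, under the Radon transform, the antisymmetric combination of $\langle\omega,\cdot\rangle\,\partial_p(\cdot)^{\wedge}$ on the right-hand side. The only things requiring (mild) care are the regularity points, namely that $f\in\mathcal{D}(\mathbb{R}^n;\mathbb{R}^n)$ makes all the Radon transforms and their $p$-derivatives well defined, and that the constant vectors $\xi,\eta$ pass freely through both $\partial/\partial x$ and $(\,\cdot\,)^{\wedge}$; once these are noted, the identity follows term by term.
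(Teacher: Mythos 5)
Your proof is correct, and it is the standard argument: the paper itself states this lemma without proof (citing Denisjuk), and the cited proof is exactly this direct computation — write $\langle Wf,(\xi,\eta)\rangle$ as the antisymmetric combination of directional derivatives of the scalars $\langle f,\xi\rangle$ and $\langle f,\eta\rangle$, then apply the Radon transform identity \eqref{eq:property of radon transform} termwise. Nothing further is needed.
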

\begin{remark}\label{remark:2.2}
	If we decompose $\xi$ and $\eta$ as
	$$\xi =\xi_1+\xi_2 \quad \mbox{ and } \quad \eta =\eta_1+\eta_2$$
	where $\xi_1$, $\eta_1$ are parallel to $\omega$ and $\xi_2$, $\eta_2$ are orthogonal to $\omega$.  From Lemma \ref{lemma:2.1}, we can see that the L.H.S. of equation \eqref{eq:Saint venant} vanishes if both vectors $\xi$, $\eta$ are orthogonal to $\omega$ or parallel to $\omega$. Therefore we can see that 
	$$\langle Wf, (\xi, \eta)\rangle = \langle Wf, (\xi_1, \eta_2)\rangle +\langle Wf, (\xi_2, \eta_1)\rangle.$$ 
	Keeping this in mind it is sufficient to consider the following case:
\end{remark}
\begin{corollary}[\cite{Denisjuk_Paper}]\label{corollary:2.1}
	Let vector $\xi\in \mathbb{R}^n$ be orthogonal to $\omega$. Then for any vector field
	$f \in \mathcal{D}(\mathbb{R}^n;\mathbb{R}^n)$
	\begin{align}\label{eq:12}
	\langle Wf , (\xi, \omega)\rangle ^{\wedge}(\omega,p) =2^{-1} \frac{\partial}{\partial p}\langle f, \xi\rangle^{\wedge}(\omega, p).
	\end{align}
\end{corollary}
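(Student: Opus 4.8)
The plan is to derive Corollary \ref{corollary:2.1} directly from Lemma \ref{lemma:2.1} by a single substitution. I would take the identity \eqref{eq:Saint venant} and specialize $\eta$ to the unit vector $\omega$ itself. Since $\omega \in \mathbb{S}^{n-1}$ we have $\langle \omega, \eta \rangle = \langle \omega, \omega \rangle = 1$, and since $\xi$ is assumed orthogonal to $\omega$ we have $\langle \omega, \xi \rangle = 0$. Plugging these into the right-hand side of \eqref{eq:Saint venant}, the term carrying $\langle \omega, \xi \rangle$ vanishes identically and the remaining term collapses to $\tfrac{1}{2}\,\frac{\partial}{\partial p}\langle f, \xi \rangle^{\wedge}(\omega, p)$, which is precisely \eqref{eq:12}.

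There is no real obstacle here: the hypotheses of Lemma \ref{lemma:2.1} are met by the pair $(\xi, \omega)$ with no extra work, and the only facts used are $|\omega| = 1$ (in force throughout, as $\omega$ ranges over $\mathbb{S}^{n-1}$) and the orthogonality $\xi \perp \omega$. If one preferred a self-contained argument, one could instead expand $\langle Wf, (\xi, \omega)\rangle = \tfrac{1}{2}\left(\frac{\partial f_i}{\partial x^j} - \frac{\partial f_j}{\partial x^i}\right)\xi^i \omega^j$, apply the Radon transform termwise, and invoke the differentiation property \eqref{eq:property of radon transform} together with $\langle \omega, \xi\rangle = 0$ to kill the $\frac{\partial f_j}{\partial x^i}\xi^i\omega^j$ contribution; but this merely reproduces the proof of Lemma \ref{lemma:2.1} in a special case, so the one-line specialization is the cleaner route.

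Finally, I would note for later use that, by Remark \ref{remark:2.2} together with the antisymmetry of $Wf$, every component $\langle Wf, (\xi, \eta)\rangle^{\wedge}$ can be written as a sum of expressions of the form treated in Corollary \ref{corollary:2.1} (one vector orthogonal to $\omega$, the other a multiple of $\omega$). Hence Corollary \ref{corollary:2.1} lets one recover $\frac{\partial}{\partial p}\langle f, \xi\rangle^{\wedge}(\omega, p)$ for all $\xi \perp \omega$ from the knowledge of $(Wf)^{\wedge}$, which is exactly the form in which the identity will be used when feeding the data into the Radon inversion formula of Theorem \ref{theorem:radon}.
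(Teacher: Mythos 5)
Your proposal is correct and is exactly the intended derivation: the paper states the result as an immediate corollary of Lemma \ref{lemma:2.1} (citing Denisjuk), and specializing $\eta=\omega$ with $\langle\omega,\omega\rangle=1$ and $\langle\omega,\xi\rangle=0$ gives \eqref{eq:12} directly. Nothing further is needed.
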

Using the above Corollary \ref{corollary:2.1} to prove Theorem \ref{th:2} (atleast in the smooth case) it is sufficient to show that the derivatives of $\frac{\partial }{\partial p}\langle f, \xi\rangle^{\wedge}(\omega,p)$ w.r.t. $p$ can be explicitly calculated in terms of the restricted Doppler transform for an arbitrary vector $\xi$ parallel to $H(\omega, p)$.
\begin{lemma}\label{lemma:2.2}
	Let $\g_0$ be an intersection point of the curve and fixed hyperplane $H(w,p)$.
	Consider the parametrization in $\mathbb{R}^n$: $x= \g_{0}+\xi t$, where $\xi \in \mathbb{S}^{n-1}$ and $t \in [0, \infty)$.  Then  for any vector field $f\in \mathcal{D}(\mathbb{R}^n;\mathbb{R}^n)$ and a weight $w(\xi)$
	\begin{align*}
	\frac{\partial^{n-1}}{\partial p^{n-1}} [f_j(\g_0+t\xi)\xi^j t^kw(\xi) ]^{\wedge}(\omega,p) \quad \text{ for }k=1, 2
	\end{align*}
	can be computed explicitly in terms of $D_{\g}f$.
\end{lemma}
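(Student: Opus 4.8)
The plan is to express the quantity
\[
\frac{\partial^{n-1}}{\partial p^{n-1}} \bigl[f_j(\g_0+t\xi)\xi^j t^k w(\xi)\bigr]^{\wedge}(\omega,p)
\]
as an integral over $\xi \in \mathbb{S}^{n-1}$ of the restricted Doppler transform $D_{\g}f$ against an explicit kernel, using the fact that the curve $\g$ encompasses the support of $f$. First I would record that, since $\g$ encompasses $B$, the restricted Doppler transform at the point $\g_0 \in \g \cap H(\omega,p)$ reads $D_{\g}f(\g_0,\xi) = \int_0^\infty f_j(\g_0+t\xi)\xi^j\,dt$ (as in the Remark following the definition of ``encompasses''), so that integrating $f_j(\g_0+t\xi)\xi^j t^k$ in $t$ over $[0,\infty)$ is \emph{not} immediately $D_{\g}f$ when $k \geq 1$ — we only have the $k=0$ moment directly. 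The key device, exactly as in Denisjuk, is to differentiate $D_{\g}f(\g_0,\xi)$ and related averages in the spherical variable $\xi$: an angular derivative of $\int_0^\infty f_j(\g_0+t\xi)\xi^j\,dt$ produces, via the chain rule $\partial_\xi(f_j(\g_0+t\xi)) = t\,\partial_i f_j(\g_0+t\xi)\cdot(\partial_\xi\xi)^i$, a factor of $t$ inside the integrand. Iterating gives access to the weight $t^k$ for $k=1,2$. So the first real step is to write $f_j(\g_0+t\xi)\xi^j t^k w(\xi)$, integrated over $t\in[0,\infty)$, as a finite combination of spherical derivatives (up to order $k$) of $D_{\g}f(\g_0,\cdot)$ times smooth functions of $\xi$; this uses that $\g_0$ depends smoothly on $(\omega,p)$ by the Kirillov--Tuy hypothesis.

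Next I would bring in the Radon transform. Writing the hyperplane $H(\omega,p)$ and using the coarea/layer-cake relation between integration over $\mathbb{R}^n$ (equivalently over all $(t,\xi)$ with $\g_0 + t\xi$ tracing out a half-space based at $\g_0$) and integration over parallel hyperplanes, one identifies
\[
\Bigl[\,\int_0^\infty f_j(\g_0+t\xi)\xi^j t^k w(\xi)\,dt\,\Bigr]^{\wedge}(\omega,p)
\]
with an integral over $\xi\in\mathbb{S}^{n-1}$ of $(\text{the above combination of spherical derivatives of }D_{\g}f)$ against the Jacobian weight coming from the change of variables $x \mapsto (t,\xi)$ based at $\g_0$, which is $|x-\g_0|^{-(n-1)}$ up to the sign conventions fixed by ``encompasses.'' Crucially, this whole expression depends on $(\omega,p)$ only through $\g_0 = \g_0(\omega,p)$ and through the constraint that $\xi$ ranges over directions hitting the support; hence applying $\partial_p^{\,n-1}$ moves $n-1$ derivatives onto the smooth dependence $p \mapsto \g_0(\omega,p)$ and onto the kernel, all of which are explicit. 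I would then observe that all the terms produced are manifestly determined by $D_{\g}f$ and its finitely many spherical and parameter derivatives, which is exactly the claim.

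The main obstacle I anticipate is bookkeeping the interplay between the $p$-derivatives (which act through $\g_0(\omega,p)$, and there may be more than one intersection point, but the Lemma fixes a single one $\g_0$) and the spherical derivatives used to manufacture the $t^k$ weights: one has to check that no derivative escapes onto an object that is not controlled by $D_{\g}f$ — in particular that the differentiation of the integration limit at $t=0$ and the behavior at $t=\infty$ (handled by compact support of $f$) produce no boundary terms beyond those expressible through the data. I would also need to verify that the Jacobian $|x-\g_0|^{-(n-1)}$ and its derivatives are integrable against the (compactly supported, smooth) integrand, which is immediate since $\g_0 \notin \operatorname{supp}(f)$. Everything else is the Leibniz rule applied to a product of smooth factors, so after the two structural steps above the computation is routine; I would present it as an explicit, if somewhat lengthy, formula and then simply note that each summand lies in the span of derivatives of $D_{\g}f$.
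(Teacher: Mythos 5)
Your overall orientation (polar coordinates on the hyperplane based at $\g_0$, spherical differentiation of the data, the Radon transform) matches the paper's, but the central step as you state it fails. You propose, as the ``first real step,'' to write $\int_0^\infty f_j(\g_0+t\xi)\xi^j t^k w(\xi)\,dt$, for \emph{fixed} $\xi$, as a finite combination of spherical derivatives of $D_{\g}f(\g_0,\cdot)$ of order up to $k$. This is not possible: differentiating $D_{\g}f(\g_0,\xi)=\int_0^\infty f_j(\g_0+t\xi)\xi^j\,dt$ in $\xi$ produces $\int_0^\infty t\,(\partial_i f_j)(\g_0+t\xi)\xi^j(\cdots)\,dt$ plus zeroth-order terms, i.e.\ a $t$-weighted ray integral of \emph{derivatives} of $f$, not of $f$ itself, and the derivative sitting on $f$ cannot be removed pointwise in $\xi$. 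In the paper's argument that derivative is disposed of only \emph{after} integrating over the whole sphere $\mathbb{S}(\omega)$ of in-plane directions, via the identity \eqref{eq:property of radon transform}, which converts $\omega_i\partial_{x_i}$ into $\partial/\partial p$ acting on the hyperplane integral. Hence each application of $L=\omega_i\partial/\partial\xi_i=t\,\omega_i\partial/\partial x_i$ simultaneously raises the $t$-power by one \emph{and} produces one $p$-derivative; starting from the polar Jacobian weight $t^{2-n}$ (so $n-1$ or $n-2$ applications are needed, not $k=1,2$), one lands exactly on $\frac{\partial^{n-1}}{\partial p^{n-1}}[f_j\xi^j t\,w(\xi)]^{\wedge}(\omega,p)$. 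The $\partial_p^{n-1}$ in the statement is thus generated together with the $t$-weight; it is not, as in your second structural step, applied at the end to an expression already identified with the data, and the lemma does not (and cannot, by this method) produce the weighted hyperplane integrals themselves without those $p$-derivatives attached.

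The second gap is the one you flag as ``bookkeeping'' but do not resolve. To reach $\partial_p^{n-1}$ of the $t^0$-weighted integral the paper must take one genuine $p$-derivative of the identity obtained after $n-2$ applications of $L$; under this differentiation the base point $\g_0(\omega,p)$ moves along the curve, and the derivative falls on $f$ through its argument, producing a term with $\frac{\partial\lambda}{\partial p}\,\partial_\lambda f_j(\g(\lambda)+t\xi)$, i.e.\ derivatives of the unknown field in the curve direction. This is not automatically ``explicit'': the paper expresses it through the data at neighbouring points of the curve by means of the operator $\tilde L=\frac{\partial\lambda}{\partial p}L^{n-2}\frac{\partial}{\partial\lambda}$ and subtracts the resulting term. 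Without this correction, and without the coupling of $t$-powers to $p$-derivatives described above, your scheme does not yield a formula in terms of $D_{\g}f$, so the proof as proposed has a genuine gap rather than merely lengthy but routine computations.
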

\begin{proof}
	The proof of this lemma is given in \cite{Denisjuk_Paper} for any tensor fields of any rank and the $n=3$ case. We give it here for the sake of completeness in our case.
	
	\begin{align*} 
	D_{\g}f(\g_0,\xi) &= \int_0^\infty f_j(\g_0+t\xi)\xi^j dt\\
	D_{\g}f(\g_0,\xi) &= \int_0^\infty\frac{f_j(\g_0+t\xi)\xi^j t^{n-2}}{t^{n-2}} dt.
	\end{align*}
	Now integrating the above equation over the unit sphere centered at $\g_0$ and perpendicular to $\o$, $\mathbb{S}(\omega) = \{\xi \in \mathbb{S}^{n-1}: |\g_{0}-\xi| =1 \text{ and } \langle \xi, \omega\rangle = 0\}$, where $|\g_0 -\xi|$ is the Euclidean distance between $\g_0$ and $\xi$.
	\begin{align*}
	\int_{\mathbb{S}(\omega)} D_{\g}f(\g_0,\xi) d\omega(\xi)&= \int_{\mathbb{S}(\omega)}\int_0^\infty
	\frac{f_j(\g_0+t\xi)\xi^j t^{n-2}}{t^{n-2}} dt d\omega(\xi)\\
	&= \int_{H_{(\omega,p)}}\frac{f_j(\g_0+t\xi)\xi^j }{t^{n-2}} ds,
	\quad \mbox{ where } ds = t^{n-2} dt d\omega(\xi)\\
	&= [f_j(\g_0+t\xi)\xi^j t^{-n+2}]^{\wedge}(\omega,p).
	\end{align*}
	where $$[f_j(\g_0+t\xi)\xi^j t^{-n+2}]^{\wedge}(\omega,p) = \int_{H(\omega,p)} f_j(y)(y-\gamma_0)^j\frac{1}{|y-\gamma_0|^{n-1}}dy.$$
	Let us multiply $D_{\g}f(\g_0,\xi)$ by the weight function $w(\xi)$ and apply the differential operator $L = \omega_i\frac{\partial}{\partial \xi_i} =  t\omega_i\frac{\partial}{\partial x_i}$(Einstein's summation is assumed) to the product $w(\xi)D_{\g}f(\g_0,\xi)$. 
	\begin{align*}
	\int_{\mathbb{S}(\omega)}L\left(w(\xi) D_{\g}f(\g_0,\xi)\right) d\omega(\xi)
	&= \int_{\mathbb{S}(\omega)}\omega_i\frac{\partial}{\partial \xi_i}(w(\xi) D_{\g}f(\g_0,\xi)) d\omega(\xi)\\
	&= \int_{\mathbb{S}(\omega)} \omega_i\frac{\partial}{\partial \xi_i}\left(w(\xi) \int_0^\infty f_j(\g_0+t\xi)\xi^j dt\right) d\omega(\xi)\\
	&= \int_{\mathbb{S}(\omega)} \int_0^\infty \omega_i\frac{\partial}{\partial \xi_i}\left(w(\xi)  f_j(\g_0+t\xi)\xi^j\right) dt d\omega(\xi)\\
	&= \int_{\mathbb{S}(\omega)} \int_0^\infty t\omega_i\frac{\partial}{\partial x_i}\left(w(\xi)  f_j(\g_0+t\xi)\xi^j\right) dt d\omega(\xi)\\
	&= \int_{H(\omega, p)}\omega_i\frac{\partial}{\partial x_i}(w(\xi)  f_j(\g_0+t\xi)\xi^j)t^{3-n} ds\\
	&= \frac{\partial}{\partial p}\int_{H(\omega, p)}(w(\xi)  f_j(\g_0+t\xi)\xi^j)t^{3-n} ds \text{, using(\ref{eq:property of radon transform}) }\\
	&= \frac{\partial}{\partial p} [f_j(\g_0+t\xi)\xi^j w(\xi)t^{3-n}]^{\wedge}(\omega,p).
	\end{align*}
	Application of the same differential operator $L$ successively $n-2$ and $n-3$ more times will give us the following two expressions respectively:
	\begin{align}\label{eq:14}
	\int_{\mathbb{S}(\omega)}L^{n-1}(w(\xi) D_{\g}f) d\omega(\xi)= \frac{\partial^{n-1}}{\partial p^{n-1}} [f_j(\g_0+t\xi)\xi^j w(\xi) t]^{\wedge}(\omega,p).
	\end{align}
	\begin{align}\label{eq:f}
	\int_{\mathbb{S}(\omega)}L^{n-2}(w(\xi) D_{\g}f) d\omega(\xi)= \frac{\partial^{n-2}}{\partial p^{n-2}} [f_j(\g_0+t\xi)\xi^j w(\xi) ]^{\wedge}(\omega,p).
	\end{align}
	Let us differentiate the above equation \eqref{eq:f} with respect to $p$ by considering planes parallel to $H(\omega, p)$:
	\begin{align*}
	\frac{\partial}{\partial p}\int_{\mathbb{S}(\omega)}L^{n-2}(w(\xi) D_{\g}f) d\omega(\xi)&= \frac{\partial^{n-2}}{\partial p^{n-2}} \left[\frac{\partial \lambda}{\partial  p}\frac{\partial }{\partial \lambda}f_j(\g(\lambda)+t\xi)\xi^j w(\xi) \right]^{\wedge}(\omega,p)\\
	&\quad \quad +\frac{\partial^{n-1}}{\partial p^{n-1}} [f_j(\g_0+t\xi)\xi^j w(\xi) ]^{\wedge}(\omega,p).
	\end{align*}
	Now if we define $\tilde{L}= \frac{\partial \lambda}{\partial  p}L^{n-2}\frac{\partial }{\partial \lambda}$ then using similar analysis, we get
	\begin{align*}
	\frac{\partial^{n-2}}{\partial p^{n-2}} \left[\frac{\partial \lambda}{\partial  p}\frac{\partial }{\partial \lambda}f_j(\g(\lambda)+t\xi)\xi^j w(\xi) \right]^{\wedge}(\omega,p) = \int_{\mathbb{S}(\omega)}\tilde{L}(w(\xi) D_{\g}f) d\omega(\xi).
	\end{align*}
	Therefore 
	\begin{align*}
	\frac{\partial^{n-1}}{\partial p^{n-1}} [f_j(\g_0+t\xi)\xi^j w(\xi) ]^{\wedge}(\omega,p) &=\frac{\partial }{\partial p}\int_{\mathbb{S}(\omega)}L^{n-2}(w(\xi) D_{\g}f) d\omega(\xi)\\
	&\quad \quad-\int_{\mathbb{S}(\omega)}\tilde{L}(w(\xi) D_{\g}f) d\omega(\xi).
	\end{align*}
\end{proof}
\begin{lemma}\label{lemma:2.3}
	Let $\g$ be a curve satisfying the Kirillov-Tuy condition of order $(n-1)$ and $D_{\g}f$ is known for all lines intersecting $\g$. Then for any vector field $f \in \mathcal{D}(\mathbb{R}^n;\mathbb{R}^n)$ and any vector $\textit{\textbf{v}}$ parallel to $H(\omega, p)$, we have
	\begin{align}\label{eq:15}
	\frac{\partial^{n-1}}{\partial p^{n-1}}\langle f(x), \textit{\textbf{v}}\rangle^\wedge(\omega,p)
	\end{align}
	can be computed explicitly in terms of $D_{\g}f$.
\end{lemma}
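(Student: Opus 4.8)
The plan is to reduce the assertion, hyperplane by hyperplane, to the cases already treated in Lemma \ref{lemma:2.2}, using the Lagrange-type interpolation of Proposition \ref{th:1} to rewrite $\langle f(x),\textit{\textbf{v}}\rangle$ in terms of the radial quantities $\langle f(x),x-\g_i\rangle$. Fix $\omega\in\mathbb{S}^{n-1}$ and a value $p$ with $H(\omega,p)\cap B\neq\emptyset$, and let $\g_1=\g_1(\omega,p),\dots,\g_{n-1}=\g_{n-1}(\omega,p)$ be the points of $\g\cap H(\omega,p)$ furnished by the Kirillov-Tuy condition, depending smoothly on $(\omega,p)$, so that $x-\g_1,\dots,x-\g_{n-1}$ are linearly independent for a.e.\ $x\in H(\omega,p)\cap B$. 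Since $\langle\textit{\textbf{v}},\omega\rangle=0=\langle x-\g_i,\omega\rangle$, all these vectors lie in the $(n-1)$-dimensional space $\omega^{\perp}$; choosing coordinates there and applying Proposition \ref{th:1} with the $x-\g_i$ in the role of the $v_i$ (and its uniqueness part to identify the resulting interpolating functional with $w\mapsto\langle f(x),w\rangle$), we obtain, for a.e.\ $x\in H(\omega,p)\cap B$, coefficients $c_i(x)$ — ratios $\Delta_i/\Delta$ of determinants having the $x-\g_\ell$ as columns, the $i$-th column being replaced by $\textit{\textbf{v}}$ in $\Delta_i$, hence rational in $x$ with polynomial numerator and denominator — such that $\textit{\textbf{v}}=\sum_{i=1}^{n-1}c_i(x)(x-\g_i)$, and consequently $\langle f(x),\textit{\textbf{v}}\rangle=\sum_{i=1}^{n-1}c_i(x)\,f_j(x)(x-\g_i)^j$.

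Integrating over $H(\omega,p)$ gives $\langle f,\textit{\textbf{v}}\rangle^{\wedge}(\omega,p)=\sum_{i}\int_{H(\omega,p)}c_i(x)f_j(x)(x-\g_i)^j\,ds$, and I would handle the $i$-th summand in polar coordinates centered at $\g_i$: $x=\g_i+t\xi$, $\xi\in\mathbb{S}(\omega)$, $t\ge 0$, $ds=t^{n-2}\,dt\,d\omega(\xi)$. Along such a ray the $i$-th column equals $t\xi$, so subtracting multiples of it from the remaining columns shows that $\Delta(\g_i+t\xi)$ and $\Delta_i(\g_i+t\xi)$ are affine in $t$ with $\Delta(\g_i+t\xi)$ vanishing at $t=0$; hence $c_i(\g_i+t\xi)=a_i(\xi)/t+b_i(\xi)$ for weights $a_i,b_i$ on $\mathbb{S}(\omega)$ depending smoothly on $(\omega,p)$ off a set of directions of measure zero. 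Since $f_j(x)(x-\g_i)^j=t\,f_j(\g_i+t\xi)\xi^j$, the $i$-th summand equals
\[
\big[f_j(\g_i+t\xi)\xi^j\,a_i(\xi)\big]^{\wedge}(\omega,p)+\big[f_j(\g_i+t\xi)\xi^j\,t\,b_i(\xi)\big]^{\wedge}(\omega,p),
\]
which are precisely the expressions (with base point $\g_0=\g_i$ and the factors $t^{0}$ and $t^{1}$) that the proof of Lemma \ref{lemma:2.2} rewrites, after applying $\frac{\partial^{n-1}}{\partial p^{n-1}}$, explicitly in terms of $D_{\g}f$. Differentiating $n-1$ times in $p$, invoking Lemma \ref{lemma:2.2} for each summand, and summing over $i$ then produces the claimed explicit formula for $\frac{\partial^{n-1}}{\partial p^{n-1}}\langle f(x),\textit{\textbf{v}}\rangle^{\wedge}(\omega,p)$.

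I expect the main obstacle to be the bookkeeping of the $p$-differentiations — the same delicacy already confronted in the proof of Lemma \ref{lemma:2.2}: as $p$ varies the intersection points $\g_i(\omega,p)$, and with them the weights $a_i,b_i$, move, so each of the $n-1$ derivatives generates correction terms (handled by the auxiliary operators $L,\widetilde L$ of that proof) that must again be expressed through $D_{\g}f$. A secondary point is to confirm that $a_i,b_i$ are admissible weights for Lemma \ref{lemma:2.2}: they are smooth off the directions where $\Delta(\g_i+t\xi)\equiv 0$, a measure-zero subset of $\mathbb{S}(\omega)$, and one must check that these do not affect the final expression — either by choosing the $\g_i$ so that the corresponding rays avoid $B$, or by noting that, since $\sum_i\Delta_i(x-\g_i)=\Delta(x)\,\textit{\textbf{v}}$ is polynomial and hence regular, the singular contributions of the $n-1$ summands cancel and a suitable regularization is legitimate.
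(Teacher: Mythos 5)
Your proposal is correct and follows essentially the same route as the paper: the paper likewise applies Proposition \ref{th:1} with the vectors $x-\g_i$ on a fixed hyperplane, observes that $t_i\,\Delta_i(\textit{\textbf{v}})/\Delta = w(\xi_i)+t_i\tilde w(\xi_i)$ (your $a_i,b_i$), and then applies $\partial_p^{n-1}$ and the two formulas from the proof of Lemma \ref{lemma:2.2} (the $t^0$ case via $\partial_p\int L^{n-2}-\int\widetilde L$ and the $t^1$ case via $\int L^{n-1}$) to each summand. Your added remarks on the motion of the $\g_i(\omega,p)$ under $p$-differentiation and on the singular directions of the weights are exactly the points the paper absorbs into Lemma \ref{lemma:2.2} and the ``known quantities $\g_{ij}$'' bookkeeping, so there is no substantive difference in approach.
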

\begin{proof}
	From our Kirillov-Tuy condition	we know that for almost every hyperplane  $H(\o,p)$ there exist points  $\g_{1},\cdots,\g_{n-1}$ on the curve $\g$ and $H(\o,p)$ satisfying generic condition. Let us fix one such hyperplane  $H(\o,p)$. Then from our assumption the following integrals are known:
	$$ \int_0^\infty  f_j(\g_i+t_i\xi) \xi^j dt_i \text{ , for } i= 1 ,\dots , n-1\mbox{ and } \forall \xi \in \mathbb{S}^{n-1}.$$ 
	For $x\in H(\o,p)$, we can write $x=\g_{j}+t_{j}\xi_{j}$ for $1\leq j\leq n-1$ for some $t_{j}\in \Rb^{+}$ and $\xi_{j}\in \Sb^{n-1}$. For any point $x\in H(\o,p)$ and for a vector $\vi$ parallel to  $H(\o,p)$, using Proposition \ref{th:1}, we have 
	\begin{align*}
	\langle f(x),\vi\rangle&= \sum\limits_{i=1}^{n-1} \langle f(x),(x-\g_{i})\rangle \frac{\Delta_{i}(\vi)}{\Delta}
	\end{align*}
	\begin{align}\label{eq:ab}
	= \sum_{i=1}^{n-1} \langle f(x), \xi_i\rangle t_i \frac{\Delta_i(\textit{\textbf{v}})}{\Delta},
	\end{align}
	where $ x-\g_i =\  \xi_it_i$ for every intersection point $\g_i$, $i =1, \dots ,n-1$ and $ \xi_jt_j = \g_{ij} +  \xi_it_i$ with $\g_{ij} = \g_i -\g_j$. Recall  \\
	\begin{align*}
	\Delta_i(\textit{\textbf{v}}) &= \det \begin{pmatrix}
	\g_{i1}+\xi_it_i\\
	\vdots\\
	\textit{\textbf{v}}\\
	\vdots\\
	\g_{i(n-1)}+\xi_it_i
	\end{pmatrix}\leftarrow i^{th} \text{ row}\quad \text{ and }\\\\
	\Delta &= \det \begin{pmatrix}
	\g_{i1}+\xi_it_i\\
	\vdots\\
	\xi_it_i\\
	\vdots\\
	\g_{i(n-1)}+\xi_it_i
	\end{pmatrix}=\det \begin{pmatrix}
	\g_{i1}\\
	\vdots\\
	\xi_it_i\\
	\vdots\\
	\g_{i(n-1)}
	\end{pmatrix}.
	\end{align*}
	After simplifying this, we can see that $ \frac{\Delta_i(\textit{\textbf{v}})}{\Delta} = w(\xi_i)+t_i\tilde{w}(\xi_i) $ where $w(\xi_i)$ and $\tilde{w}(\xi_i)$ are function of $\xi_i$ and known quantities $\g_{ij}$. Putting this back to equation \eqref{eq:ab}, we get 
	\begin{align*}
	\langle f(x),\vi\rangle = \sum_{i=1}^{n-1} \langle f(x), \xi_i\rangle(w(\xi_i)+t_i\tilde{w}(\xi_i)).
	\end{align*}
	Then we have the following:
	\begin{align*}
	\frac{\partial^{n-1}}{\partial p^{n-1}}\langle f(x), \textit{\textbf{v}}\rangle^\wedge(\omega,p) &= \sum_{i=1}^{n-1}\frac{\partial^{n-1}}{\partial p^{n-1}} \int_{H(\omega,p)} \langle f(x), \xi_i\rangle(w(\xi_i)+t_i\tilde{w}(\xi_i))ds\\
	&= \sum_{i=1}^{n-1}\left\{\frac{\partial}{\partial p}\int_{\mathbb{S}(\omega)}L^{n-2}(w(\xi_i) D_{\g}f) d\omega(\xi_i)\right.\\
	&\quad \quad \quad -\int_{\mathbb{S}(\omega)}\tilde{L}(w(\xi_i) D_{\g}f) d\omega(\xi_i)\\
	&\quad \quad \quad +\left.\int_{\mathbb{S}(\omega)}L^{n-1}(\tilde{w}(\xi_i) D_{\g}f) d\omega(\xi_i)\right\}.
	\end{align*}
\end{proof}
\subsection{Proof of Theorem \ref{th:2}}
\begin{proof}[Proof of Theorem \ref{th:2}]
	We fix a hyperplane $H(\omega, p)$ and consider arbitrary vectors $\xi$, $\eta$. Decompose these vectors in the components parallel to $\omega$ and  orthogonal to $\omega$ as 
	\begin{align*}
	\xi =\xi_{1}+ \xi_{2}, \quad  \quad \eta= \eta_{1} + \eta_{2}, 
	\end{align*}
	where $\xi_{1}$, $\eta_{1}$ are orthogonal to $\omega$, while $\xi_{2}, \eta_{2}$ are parallel to $\omega$.
	Using Lemma \ref{lemma:2.1} and Corollary \ref{corollary:2.1} implies that,
	\begin{align*}
	\langle  Wf , (\xi,\eta )\rangle ^{\wedge}(\omega,p)&= \left[\langle  Wf , (\xi_1,\eta_1 )\rangle +\langle  Wf , (\xi_1,\eta_2 )\rangle+\langle  Wf , (\xi_2,\eta_1 )\rangle\right.\\
	&\quad \quad \quad +\left.\langle  Wf , (\xi_2,\eta_2 )\rangle\right]^{\wedge}(\omega,p)
	\end{align*}
	From Remark \ref{remark:2.2}, it is known that the first and last term in the above expression are zero. Therefore, we have
	\begin{align*}
	\langle  Wf , (\xi,\eta )\rangle ^{\wedge}(\omega,p)&= \left[\langle  Wf , (\xi_1,\eta_2 )\rangle+\langle  Wf , (\xi_2,\eta_1 )\rangle\right]^{\wedge}(\omega,p)\\
	&=\left[\langle  Wf , (\xi_1,\eta_2 )\rangle-\langle  Wf , (\eta_1,\xi_2 )\rangle\right]^{\wedge}(\omega,p)\\
	&=  \langle\eta , \omega\rangle\left[\langle  Wf , (\xi_1,\omega )\rangle\right]^{\wedge}(\omega,p)- \langle\xi , \omega\rangle\left[\langle  Wf , (\eta_1,\omega)\rangle\right]^{\wedge}(\omega,p).
	\end{align*}
	Now from Corollary \ref{corollary:2.1}, we have 
	\begin{align*}
	\frac{\partial^{n-1}}{\partial p^{n-1}}\langle  Wf , (\xi,\eta )\rangle ^{\wedge}(\omega,p)
	&=  \frac{1}{2}\left(\langle\eta , \omega\rangle\frac{\partial^{n}}{\partial p^{n}}\langle f , \xi_1 \rangle^{\wedge}(\omega,p)- \langle\xi , \omega\rangle\frac{\partial^{n}}{\partial p^{n}}\langle  f , \eta_1\rangle^{\wedge}(\omega,p)\right).
	\end{align*}
	Finally,  we use Theorem \ref{theorem:radon} to get
	\begin{align}\label{eq:Saint Venant operator in terms of Doppler transform}
	c\langle Wf(x),(\xi,\eta) \rangle 
%	&=
%	\left[\Lambda \frac{\partial^{n-1}}{\partial p^{n-1}}\langle  Wf , (\xi,\eta )\rangle ^{\wedge}(\omega,p)\right]^{\vee}(x),\\
	= \left[ \frac{1}{2}\Lambda \left(\langle\eta , \omega\rangle\frac{\partial^{n}}{\partial p^{n}}\langle f , \xi_1 \rangle^{\wedge}(\omega,p)- \langle\xi , \omega\rangle\frac{\partial^{n}}{\partial p^{n}}\langle  f , \eta_1\rangle^{\wedge}(\omega,p)\right)\right]^{\vee}(x)
	\end{align}
	where $\Lambda$ and $c$ are the same as in Theorem \ref{theorem:radon}. Hence we have recovered $Wf$ because  R.H.S. is known from Lemma \ref{lemma:2.3}.  
	After this we can use the formula from \cite[Theorem 2.5]{Denisjuk_Paper} for $m=1$ to get the solenoidal part $f_{\Rb^n}^s$ explicitly from $Wf$. Therefore combining all these we find an operator $(D_{\g})^{-1}$  such that
\begin{align}\label{eq:Inverse of restricted Doppler transform}
 \mathtt{S} (f) = f^s_{\Rb^n} = D_{\g}^{-1}D_{\g}f,\quad \text{ for } f \in \mathcal{D}(\mathbb{R}^n;\mathbb{R}^n) .
\end{align}
This proves our theorem for the smooth case. 

Next we claim that the same formula is also valid for compactly supported distributions. We extend   $D_{\g}^{-1}D_{\g} : \mathcal{E}^\prime(\mathbb{R}^n;\mathbb{R}^n) \rightarrow  \mathcal{D}^\prime(\mathbb{R}^n;\mathbb{R}^n)$ using duality. Then we want to prove the following:
\begin{claim}
\begin{align}\label{eq:Inverse of restricted Doppler transform for distribution}
 \mathtt{S} (f) = f^s_{\Rb^n} = D_{\g}^{-1}D_{\g}f,\quad \text{ for } f \in \mathcal{E}^\prime(\mathbb{R}^n;\mathbb{R}^n) .
\end{align}
\end{claim}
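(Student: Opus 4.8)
The plan is to deduce the distributional identity \eqref{eq:Inverse of restricted Doppler transform for distribution} from the smooth identity \eqref{eq:Inverse of restricted Doppler transform} by mollification, exploiting that both sides depend continuously on $f$.

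First I would fix $f\in\mathcal{E}^\prime(\mathbb{R}^n;\mathbb{R}^n)$ with $\mathrm{supp}\,f\subset B$ and choose a slightly larger domain $B^\prime\supset\overline{B}$ which is still encompassed by $\g$ and for which $\g$ still satisfies the Kirillov--Tuy condition of order $(n-1)$; this is possible since $\g\cap\overline{B}=\emptyset$ and the Kirillov--Tuy requirement persists under a sufficiently small enlargement of the domain. Let $(\rho_k)_{k}$ be a standard mollifying sequence on $\mathbb{R}^n$ and put $f_k=f*\rho_k$, the convolution taken componentwise. Then $f_k\in\mathcal{D}(\mathbb{R}^n;\mathbb{R}^n)$, $\mathrm{supp}\,f_k\subset B^\prime$ for all large $k$, and $f_k\to f$ in $\mathcal{E}^\prime(\mathbb{R}^n;\mathbb{R}^n)$. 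Applying the already proved smooth case \eqref{eq:Inverse of restricted Doppler transform}, with $B^\prime$ in place of $B$, gives $\mathtt S(f_k)=D_{\g}^{-1}D_{\g}f_k$ for every such $k$.

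Next I would let $k\to\infty$ in this identity, in $\mathcal{D}^\prime(\mathbb{R}^n;\mathbb{R}^n)$. For the left side: by the uniqueness in Theorem \ref{th:decomposition theorem in Euclidean space}, the solenoidal projection $\mathtt S\colon f\mapsto f^s_{\Rb^n}$ commutes with translations, hence with convolution, so $\mathtt S(f_k)=(\mathtt S f)*\rho_k\to\mathtt S f$ in $\mathcal{S}^\prime$, and a fortiori in $\mathcal{D}^\prime$. For the right side: $D_{\g}$ is (sequentially) continuous from $\mathcal{E}^\prime(\overline{B^\prime};\mathbb{R}^n)$ into $\mathcal{D}^\prime(\mathcal{C}_{\g})$, since for every $\phi\in\mathcal{D}(\mathcal{C}_{\g})$ one has $\langle D_{\g}f_k,\phi\rangle=\langle f_k,D_{\g}^{*}\phi\rangle\to\langle f,D_{\g}^{*}\phi\rangle=\langle D_{\g}f,\phi\rangle$, using that $D_{\g}^{*}\phi$ is smooth on a neighbourhood of $\overline{B^\prime}$ --- this is precisely where the encompassing hypothesis enters, $D_{\g}^{*}\phi$ being smooth away from $\g$. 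Moreover the reconstruction operator $D_{\g}^{-1}$ produced in the proof of Theorem \ref{th:2} is a composition of operations each continuous on distributions: multiplication by smooth functions of $(\omega,p,\xi)$, the differential operators $L,\widetilde L$ and $\partial_p$, the spherical integrations $\int_{\mathbb{S}(\omega)}(\cdot)\,d\omega(\xi)$, the Radon inversion $g\mapsto c^{-1}\big(\Lambda\,\partial_p^{\,n-1}g^{\wedge}\big)^{\vee}$ of Theorem \ref{theorem:radon}, and the algebraic passage from $Wf$ to $f^s_{\Rb^n}$ supplied by \cite[Theorem 2.5]{Denisjuk_Paper}, cf.\ \eqref{eq:Saint Venant operator in terms of Doppler transform}. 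Hence $D_{\g}^{-1}D_{\g}f_k\to D_{\g}^{-1}D_{\g}f$ in $\mathcal{D}^\prime(\mathbb{R}^n;\mathbb{R}^n)$, and passing to the limit in $\mathtt S(f_k)=D_{\g}^{-1}D_{\g}f_k$ yields \eqref{eq:Inverse of restricted Doppler transform for distribution}.

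The hard part will be the last ingredient: checking that every building block of $D_{\g}^{-1}$ --- in particular the spherical integrations, the smooth weights $w(\xi_i),\widetilde w(\xi_i)$ produced by the Lagrange interpolation of Proposition \ref{th:1}, and the $\Lambda$-factor (the Hilbert transform in $p$ when $n$ is even) --- really does extend to a continuous map between the relevant distribution spaces, with outputs lying in spaces on which the subsequent operations are defined. An essentially equivalent but cleaner route avoids this by duality: since $\mathtt S$ is self-adjoint for the $L^2$ pairing (an integration by parts in which the decay estimates \eqref{eq:estimates in decomposition}, valid for $n\geq3$, annihilate the boundary term at infinity), the smooth identity \eqref{eq:Inverse of restricted Doppler transform} forces $(D_{\g}^{-1}D_{\g})^{*}=\mathtt S$ on $\mathcal{D}(\mathbb{R}^n;\mathbb{R}^n)$; the duality extension of $D_{\g}^{-1}D_{\g}$ to $\mathcal{E}^\prime(\mathbb{R}^n;\mathbb{R}^n)$ then satisfies $\langle D_{\g}^{-1}D_{\g}f,\psi\rangle=\langle f,\mathtt S\psi\rangle=\langle\mathtt S f,\psi\rangle$ for all $\psi\in\mathcal{D}(\mathbb{R}^n;\mathbb{R}^n)$, which is exactly \eqref{eq:Inverse of restricted Doppler transform for distribution}.
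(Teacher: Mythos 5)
Your plan is essentially the paper's own argument: the paper likewise approximates $f\in\mathcal{E}^\prime(\mathbb{R}^n;\mathbb{R}^n)$ by test vector fields, passes to the limit in the reconstruction identity \eqref{eq:Saint Venant operator in terms of Doppler transform} (citing Ludwig for the continuity of the Radon transform and its inverse on distributions, and Sharafutdinov's Lemma 2.5.2 for the left-hand side), and then applies Denisjuk's Theorem 2.5 at the distributional level, which is the same density-plus-continuity structure you propose (your duality shortcut is a reasonable variant of the same idea). One small simplification: since $\mathrm{supp}\,f$ is a compact subset of the open domain $B$, mollification with a small enough parameter already keeps the supports inside $B$, so the enlarged domain $B^\prime$ and the unproven assertion that the Kirillov--Tuy condition persists under enlargement are unnecessary.
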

\textit{Proof of Claim: } For $f \in \mathcal{E}^\prime(\mathbb{R}^n;\mathbb{R}^n)$, we can a find a sequence of $f_k \in \mathcal{D}(\mathbb{R}^n;\mathbb{R}^n)$ which converges to $f$ in the distribution sense. Using the continuity of the Radon transform and the inverse radon transform on distributions \cite[Section 4]{Ludwig1966}, we can see that the right hand side of \eqref{eq:Saint Venant operator in terms of Doppler transform} makes sense for $f \in \mathcal{E}^\prime(\mathbb{R}^n;\mathbb{R}^n)$. We can take limit $k \rightarrow \infty $ on the left hand side \eqref{eq:Saint Venant operator in terms of Doppler transform} also because of \cite[Lemma 2.5.2]{Sharafutdinov_Book}. Hence the \eqref{eq:Saint Venant operator in terms of Doppler transform} is valid for compactly supported distributions also.

Then we can apply theorem \cite[Theorem 2.5]{Denisjuk_Paper} (this result is true  for tempered distributions also by duality) for  $\mathcal{E}^\prime(\mathbb{R}^n;\mathbb{R}^n)$  to conclude that 
$f^s_{\Rb^n} = D_{\g}^{-1}D_{\g}f$, $\text{ for } f \in \mathcal{E}^\prime(\mathbb{R}^n;\mathbb{R}^n)$.  This completes the proof of our theorem.
\end{proof}
\begin{corollary}\label{corollary:function recovery}
	Let $v$ be a compactly supported distribution whose support is contained in $B$ and the ray transform of $v$ is known for all lines intersecting the curve $\g$ then $v$ can be reconstructed with an explicit formula.
\end{corollary}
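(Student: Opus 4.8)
This statement is the scalar ($m=0$) counterpart of Theorem \ref{th:2}, and the plan is to run the argument of Lemmas \ref{lemma:2.2}--\ref{lemma:2.3} and of the proof of Theorem \ref{th:2} in the much simpler scalar setting, where $\gamma$ is as in Theorem \ref{th:2} (it encompasses $B$ and satisfies the Kirillov--Tuy condition of order $(n-1)$). The essential simplification is that a scalar function coincides with its own solenoidal part --- the ray transform over all lines has trivial kernel on functions --- so there is no potential component, no Saint--Venant operator to reconstruct, and no need for the Lagrange interpolation of Proposition \ref{th:1}: the contraction $\langle f(x),\mathbf{v}\rangle$ appearing in Lemma \ref{lemma:2.3} is simply replaced by $v(x)$ itself. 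Hence it suffices to recover $\frac{\partial^{n-1}}{\partial p^{n-1}}v^{\wedge}(\omega,p)$ explicitly from $D_\gamma v$ and then apply the Radon inversion formula, Theorem \ref{theorem:radon}.

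For the recovery step I would specialize the proof of Lemma \ref{lemma:2.2} to the weight $w\equiv 1$ and power $k=0$, with the vector field $f$ replaced by the scalar $v$. Fix a hyperplane $H(\omega,p)$ meeting $B$; by the Kirillov--Tuy condition (of order $(n-1)\ge 1$) there is a point $\gamma_0=\gamma(\lambda(\omega,p))\in\gamma\cap H(\omega,p)$ depending smoothly on $(\omega,p)$, and since $\gamma$ encompasses $B$ the restricted ray transform along lines through $\gamma_0$ is $D_\gamma v(\gamma_0,\xi)=\int_0^\infty v(\gamma_0+t\xi)\,dt$. Writing the integrand as $v(\gamma_0+t\xi)\,t^{n-2}/t^{n-2}$ and integrating over $\mathbb{S}(\omega)$ with $ds=t^{n-2}\,dt\,d\omega(\xi)$ gives
\[
\int_{\mathbb{S}(\omega)} D_\gamma v(\gamma_0,\xi)\,d\omega(\xi)=\bigl[\,v(\gamma_0+t\xi)\,t^{-n+2}\,\bigr]^{\wedge}(\omega,p).
\]
Applying the operator $L=\omega_i\partial/\partial\xi_i=t\,\omega_i\partial/\partial x_i$ a total of $n-2$ times and using \eqref{eq:property of radon transform} at each step converts $t^{-n+2}$ into $t^{0}$ and produces $n-2$ derivatives in $p$, so that
\[
\int_{\mathbb{S}(\omega)} L^{n-2}\bigl(D_\gamma v\bigr)\,d\omega(\xi)=\frac{\partial^{n-2}}{\partial p^{n-2}}\,v^{\wedge}(\omega,p),
\]
the right-hand side being independent of $\gamma_0$. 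Differentiating once more in $p$ and accounting for the $p$-dependence of $\gamma_0$ through the operator $\tilde{L}=\frac{\partial\lambda}{\partial p}\,L^{n-2}\,\frac{\partial}{\partial\lambda}$, exactly as in Lemma \ref{lemma:2.2}, would give the explicit identity
\[
\frac{\partial^{n-1}}{\partial p^{n-1}}\,v^{\wedge}(\omega,p)=\frac{\partial}{\partial p}\int_{\mathbb{S}(\omega)} L^{n-2}\bigl(D_\gamma v\bigr)\,d\omega(\xi)-\int_{\mathbb{S}(\omega)}\tilde{L}\bigl(D_\gamma v\bigr)\,d\omega(\xi).
\]

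With this in hand, Theorem \ref{theorem:radon} yields the explicit reconstruction
\[
c\,v(x)=\left(\Lambda\,\frac{\partial^{n-1}}{\partial p^{n-1}}\,v^{\wedge}(\omega,p)\right)^{\vee}(x)
\]
with $c$ and $\Lambda$ as there, which finishes the smooth case. For a compactly supported distribution $v$ supported in $B$, I would argue exactly as at the end of the proof of Theorem \ref{th:2}: approximate $v$ by $v_k\in\mathcal{D}(\mathbb{R}^n)$ in $\mathcal{E}'(\mathbb{R}^n)$ and use continuity of the Radon transform and of its inverse on distributions (\cite{Ludwig1966}) to pass to the limit in the displayed formula. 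I do not anticipate a genuine obstacle, since this scalar case is strictly simpler than the tensorial one already carried out; the only points requiring care are that the Kirillov--Tuy hypothesis is genuinely used --- merely to furnish a single, smoothly varying intersection point $\gamma_0(\omega,p)$, so that the extra $p$-differentiation through the base point in the last display is legitimate --- together with the routine bookkeeping in the distributional limit.
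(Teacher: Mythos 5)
Your proposal is correct and follows essentially the same route as the paper, whose proof simply invokes the argument of Lemma \ref{lemma:2.2} for scalar functions (your $w\equiv 1$, $k=0$ specialization, followed by Radon inversion via Theorem \ref{theorem:radon}) and then the same limiting argument used at the end of the proof of Theorem \ref{th:2} for compactly supported distributions. Your observation that only a single smoothly varying intersection point $\gamma_0(\omega,p)$ is needed in the scalar case is an accurate, slightly more explicit account of how the hypothesis enters.
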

\begin{proof}
	The proof will follow for the smooth case directly from the arguments in the Lemma \ref{lemma:2.2} for functions instead of vector fields. And then using the limiting argument we can conclude for the general case.
\end{proof}
\subsection{Proof of Theorem \ref{Moments_thm}}
\begin{proof}[Proof of Theorem \ref{Moments_thm}]
 As we know from our assumptions on $f$ that $f$ is supported inside $B$ and we also that form the decomposition theorem \ref{th:decomposition theorem in Euclidean space} that $v_{\Rb^n}$ and $f^s_{\Rb^n}$ are smooth outside $B$. Thus we have 
	 $$ f^s_{\Rb^n}(x) = - \D v_{\Rb^n}(x), \quad \mbox{ for } x \in \Rb^n\setminus B$$
	 and 
	 $$ |f^s_{\Rb^n}(x)|\leq C(1+|x|)^{1-n},\quad |v_{\Rb^n}(x)|\leq C(1+|x|)^{2-n}, \mbox{  for } x \in \Rb^n\setminus B$$
Therefore $\D v_{\Rb^n}(x)$ is known for every $x\in \Rb^n\setminus B$ because $f^s_{\Rb^n}$ is known from $D_{\g}f$. We can  see from the above estimates that the integral of $f^s_{\Rb^n}$ along lines make sense only if $n \geq 3$ (This is why we have to use different arguments for $n=2$ case). Fix any point $x_0$ on $\Rb^n \setminus B$  and a direction vector $\xi \in \Sb^{n-1}$ such that the ray $\{x_0+t \xi: t \in [0, \infty)\}$ lie outside $B$. Consider 
	 \begin{align*}
	 \int_0^\infty (\D v_{\Rb^n}(x_0+t \xi))_i\xi^i dt &= -\int_0^\infty \langle f^s_{\Rb^n}(x_0+t \xi), \xi \rangle dt\\
	 \int_0^\infty \frac{d}{d t}( v_{\Rb^n}(x_0+t \xi) dt &= -\int_0^\infty \langle f^s_{\Rb^n}(x_0+t \xi), \xi \rangle dt\\
	 v_{\Rb^n}(x_0)&= \int_0^\infty \langle f^s_{\Rb^n}(x_0+t \xi), \xi \rangle dt.
	 \end{align*}
	 Since the R.H.S. of the last equality is known hence $v_{\Rb^n}(x_0)$ is known for all $x_0 \in \Rb^n \setminus B$. In particular $v_{\Rb^n}|_{\partial B}$ is known.
	 \par Next we decompose $f$ using Theorem \ref{th: decomposition of tensor field on manifolds} as the following:
	 $$ f = f^s_B +\D v_B, \quad \delta f^s_B = 0, \quad v_B|_{\partial B} = 0.$$
	 On $B$, we have
	 \begin{align}\label{eq:relation between decompositions}
	 f^s_{\Rb^n} - f^s_B = \D (v_B -v_{\Rb^n})
	 \end{align}
	 Applying divergence operator $\delta$ on both side, we get
	 $$\delta \D (v_B -v_{\Rb^n}) = 0$$
	 $\delta \D$ is the Laplace operator. Now we solve the following elliptic boundary value problem to get $v_B -v_{\Rb^n}$:
	 \begin{align*}
	 \delta \D (v_B -v_{\Rb^n}) = 0, \quad \mbox{ in } B\\
	 (v_B -v_{\Rb^n}) = g,  \quad \mbox{ on } \partial B,
	 \end{align*}
	 where $g(x) = -\int_0^\infty \langle f^s_{\Rb^n}(x+t \xi), \xi \rangle dt$ with $\xi$ is an outward normal at $x \in \partial B$. 
	 \par Now $f^s_B $  can be computed from the equation \eqref{eq:relation between decompositions}  once we know  $(v_B -v_{\Rb^n})$ in $B$ from the above elliptic boundary value problem. In the calculation below, we can assume $v_B$ and $f^s_B $ are zero outside $B$ because the support of $f$ is contained in $B$. Now fix a point $\g_0$ on the curve $\g$ and consider the following:
	\begin{align*}
	If (\g_0, \xi)&= If^s_{B}(\g_0, \xi) + I(\D v_{B})(\g_0, \xi)\\
	I(\D v_{B})(\g_0, \xi)&=	If (\g_0, \xi) - If^s_{B}(\g_0, \xi)\\
	\int_0^\infty t \frac{\partial v_{B}}{\partial x^i}(\g_0 +t \xi)\xi^i dt&=	If (\g_0, \xi) - If^s_{B}(\g_0, \xi)\\
		\int_0^\infty t \frac{d }{dt}\left\{v_{B}(\g_0+t \xi)\right\} dt&=	If (\g_0, \xi) - If^s_{B}(\g_0, \xi)\\
			-	\int_0^\infty v_{B}(\g_0 +t \xi) dt&=	If (\g_0, \xi) - If^s_{B}(\g_0, \xi)\\
		\int_0^\infty v_{B}(\g_0 +t \xi) dt&=If^s_{B}(\g_0, \xi)-	If (\g_0, \xi) \\
			Dv_{B}(\g_0 , \xi) &=If^s_{B}(\g_0, \xi)-	If (\g_0, \xi).
	\end{align*}	 	 
Therefore $Dv_{B}(\g_0, \xi)$ is known because R.H.S. is known from above discussion. Which implies that        $Dv_{B}$ is known for all lines intersecting the curve $\g$  since $\g_0$ was an arbitrary point of the curve $\g$ and $\xi \in \Sb^{n-1}$. Hence $v_B$ can be reconstructed from the Corollary \ref{corollary:function recovery} which completes the proof of the Theorem \ref{Moments_thm}.
\end{proof}

\bibliographystyle{plain}
\bibliography{Restricted_Doppler_tomography}
\end{document}